\numberwithin{equation}{section}
\theoremstyle{plain} %% This is the default
\newtheorem{theorem}{Theorem}[section]
\newtheorem{corollary}[theorem]{Corollary}
\newtheorem{claim}[theorem]{Claim}
\newtheorem{subclaim}[theorem]{Subclaim}
\theoremstyle{definition}
\newcommand{\wec}[1]{{\mathbf{#1}}}
\newcommand{\wrel}[1]{\;#1\;}
\newcommand{\Con}{{\mathrm{Con}}}
\begin{document}

%%%%%%%%%%%%%%%%%%%%%%%%%%%%%%%%%%%%%%%%%%%%%%%%%%%%%%%%%%%%%%%%%%%
%%  FRONT MATTER                                                 %%
%%%%%%%%%%%%%%%%%%%%%%%%%%%%%%%%%%%%%%%%%%%%%%%%%%%%%%%%%%%%%%%%%%%

%\title[Hereditary formations of algebras]{Hereditary formations of algebras satisfying abelianness conditions}
%\title[Formations satisfying abelianness conditions]{Formations satisfying abelianness conditions}
%\title{Supernilpotent${}^\ast$ formations are pseudovarieties}
%\title{Supernilpotent formations are pseudovarieties}
%      {Two-term supernilpotent formations are pseudovarieties} ????
\title[Subalgebras as retracts of finite subdirect powers]{Representing subalgebras as retracts \\
  of finite subdirect powers}

%\corrauthor[K. A. Kearnes]{Keith A. Kearnes}
\author[K. A. Kearnes]{Keith A. Kearnes}
\address{Department of Mathematics\\
University of Colorado\\
Boulder, CO 80309-0395\\
USA}
\email{kearnes@colorado.edu}

\author[A. Rasstrigin]{Alexander Rasstrigin}
\address{Department of Higher Mathematics and Physics\\
Volgograd State Socio-Pedagogical University\\
Volgograd\\
Russia}
\email{rasal@fizmat.vspu.ru}

\thanks{This material is based upon work supported by
  the National Science Foundation grant no.\ DMS 1500254.}

\subjclass{Primary: 08A05; Secondary: 08A30, 08A60, 20F17}

\keywords{abelian, formation, higher commutator, pseudovariety,
supernilpotent, two term condition, unary algebra}

\begin{abstract}
  We prove that if   $\mathbb A$ is an algebra that is 
  supernilpotent with respect to the $2$-term
  higher commutator, and $\mathbb B$ is a subalgebra
  of $\mathbb A$, then $\mathbb B$
  is representable as a
  %homomorphic image
  %quotient
  retract
  of a finite subdirect power of $\mathbb A$.
\end{abstract}

\maketitle

%%%%%%%%%%%%%%%%%%%%%%%%%%%%%%%%%%%%%%%%%%%%%%%%%%%%%%%%%%%%%%%%%%%
%%  MAIN MATTER                                                  %%
%%%%%%%%%%%%%%%%%%%%%%%%%%%%%%%%%%%%%%%%%%%%%%%%%%%%%%%%%%%%%%%%%%%

\section{Introduction}\label{intro}
The paper \cite{neumann} by Peter M. Neumann
opens with the remark that, in conversations
with other group theorists, the following question
arose several times: Must a formation of finite
nilpotent groups be closed under subgroups?
Since the formation generated by a finite group
$\mathbb A$ is the class of groups that are representable
as homomorphic images
of finite subdirect powers of $\mathbb A$,
this question is equivalent to the following
one: If $\mathbb A$ is a finite nilpotent group
and $\mathbb B$ is a subgroup of $\mathbb A$, must $\mathbb B$
be representable as a homomorphic image
of a finite subdirect power of $\mathbb A$?

Neumann proves that the answer is Yes.
His proof relies on the
nonobvious result of Michael Vaughan-Lee
that any characteristic subgroup
of a relatively free class-$c$ nilpotent group of
rank $k$ must be fully invariant, provided $k>c$
(cf.\ \cite{vaughan-lee}).
%Neumann's proof does not work
%for infinite groups, even if they are assumed to be abelian.
%
%(Neumann's proof does not work for an infinite nilpotent
%group $\mathbb A$, since it requires finitely generated
%free groups in the variety generated by $\mathbb A$
%to be representable as finite subdirect products of $\mathbb A$.)
%This is false even for abelian groups,
%e.g. when $\mathbb A = \mathbb Q/\mathbb Z$.

Using different ideas, 
our paper extends Neumann's result from finite groups
to arbitrary algebraic structures.
Where Neumann focuses on finite groups that are nilpotent,
we focus instead on possibly-infinite
algebras that are supernilpotent with respect to the $2$-term
higher commutator. Our theorem includes his if
one constrains its scope to finite groups.

We close the paper by describing an 
algebra $\mathbb A$ which shows that our
result is, in a sense, sharp.
Our algebra has the following properties:
\begin{enumerate}
\item  $\mathbb A$ is an expansion of a finite group.
\item  $\mathbb A$ is $2$-step nilpotent as an algebraic structure,
  but it is not supernilpotent.
\item  $\mathbb A$ has a subalgebra $\mathbb B$ that is not
representable as a homomorphic image
of a finite subdirect power of $\mathbb A$.
\end{enumerate}

\section{Discussion}

Our main theorem is

\begin{theorem} \label{main}
If $\mathbb A$ is an algebra that is 
supernilpotent with respect to the $2$-term
higher commutator, and $\mathbb B$ is a subalgebra
of $\mathbb A$, then $\mathbb B$
is representable as a retract
of a finite subdirect power of $\mathbb A$.
\end{theorem}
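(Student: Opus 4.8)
The plan is to realize $\mathbb{B}$ as the image of an idempotent endomorphism of a finite subdirect power of $\mathbb{A}$; since such an image is automatically a retract, it suffices to produce, for some finite $n$, a subdirect subalgebra $\mathbb{C}\le\mathbb{A}^n$ together with an endomorphism $e\colon\mathbb{C}\to\mathbb{C}$ satisfying $e\circ e=e$ and $e(\mathbb{C})\cong\mathbb{B}$. The copy of $\mathbb{B}$ obtained will generally not be the naive diagonal $\{(b,\dots,b):b\in\mathbb{B}\}$: already for $\mathbb{A}=\mathbb{Z}/4\mathbb{Z}$ and $\mathbb{B}=\{0,2\}$ one checks that a splitting lives in a ``twisted'' copy of $\mathbb{B}$ inside a subdirect $\mathbb{C}\le\mathbb{A}^2$, and exists in the power even though no retraction $\mathbb{A}\to\mathbb{B}$ exists. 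The role of the power is precisely to create enough room to split off $\mathbb{B}$, and the role of supernilpotence is to guarantee that finitely many coordinates suffice and that the splitting map respects the operations.

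The engine is the hypothesis that $\mathbb{A}$ is supernilpotent of some degree $k$ with respect to the $2$-term higher commutator, i.e.
\[
  [\,\underbrace{1_{\mathbb A},\dots,1_{\mathbb A}}_{k+1}\,]=0_{\mathbb A},
\]
where $1_{\mathbb A}$ and $0_{\mathbb A}$ denote the largest and smallest congruences. I would first unpack this into a usable ``bounded degree'' statement: the vanishing of the $(k{+}1)$-ary commutator is equivalent to a family of term identities asserting that the $(k{+}1)$-st order discrete difference of every polynomial operation of $\mathbb{A}$ is trivial. Concretely, fixing a base point and reading a polynomial $p(x_1,\dots,x_m)$ through the two-term condition, one obtains a ``master formula'' reconstructing $p$ from its restrictions in which at most $k$ of the variables are moved off the base point. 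This is the non-Maltsev analogue of the fact that a word in a class-$c$ nilpotent group has bounded multilinear degree, and it is the form in which supernilpotence will be consumed below.

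For the construction I would index the coordinates of the power by a finite set $D$ of ``sample points'' chosen large enough to serve two purposes at once: first, so that for every $a\in\mathbb{A}$ some coordinate projection of $\mathbb{C}$ realizes $a$, which forces subdirectness; and second, so that the master difference formula has enough arguments available to reconstruct, from an element of $\mathbb{C}$, a canonical representative lying in $\mathbb{B}$. I would take $\mathbb{C}$ to be the subalgebra of $\mathbb{A}^D$ generated by a diagonal copy of $\mathbb{B}$ together with the ``generic'' tuples needed for subdirectness, and define $e$ by applying the master formula coordinatewise, arranged so that every contribution coming from outside $\mathbb{B}$ cancels by the $(k{+}1)$-ary vanishing. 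The finiteness of $k$ is exactly what keeps $\lvert D\rvert$ finite.

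The main obstacle is verifying that $e$ is a homomorphism, and this is where essentially all of the hypothesis is spent. Checking that $e$ commutes with each basic operation reduces, after expanding both sides by the master formula, to an instance of the symmetry and monotonicity of the $2$-term higher commutator together with the degree-$(k{+}1)$ vanishing; it is here that the two-term version of the commutator, rather than the one-term term-condition commutator, is indispensable, since the theorem must cover non-Maltsev algebras such as unary algebras, where no difference term is available and only the two-term condition is well behaved. Once $e$ is known to be an endomorphism, idempotency follows from the reconstruction property of the master formula, surjectivity onto the chosen copy of $\mathbb{B}$ is built into the construction, and subdirectness holds by the choice of $D$; these three verifications are routine by comparison.
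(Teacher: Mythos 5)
The load-bearing step of your plan --- unpacking $2$-term supernilpotence into a ``master formula'' built from $(k{+}1)$-st order discrete differences, and then defining the idempotent $e$ by that formula so that contributions from outside $\mathbb{B}$ cancel --- is precisely the step that does not exist at the stated level of generality, and you half-notice the problem yourself. Discrete differences and inclusion--exclusion-style reconstruction presuppose a difference operation, i.e.\ a Maltsev or group-like term; in a unary algebra (a case the theorem must cover, as you say) there is no candidate formula at all, so your claimed equivalence of the vanishing $(k{+}1)$-ary $2$-term commutator with ``term identities asserting that the $(k{+}1)$-st difference of every polynomial is trivial'' is not even formulable there, let alone true. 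What $2$-term supernilpotence of class $c$ actually gives is purely \emph{implicit}: the cube algebra $M(1,\ldots,1)\leq \mathbb{A}^{2^{c+1}}$, generated by the tuples constant on each pair of opposite hyperfaces, is the graph of a function from the first $2^{c+1}-1$ coordinates to the last. No explicit expression for that function is supplied by the hypothesis, and nothing in ``symmetry and monotonicity'' of the commutator would make a formula-defined $e$ commute with the basic operations. So the main obstacle you flag --- verifying that $e$ is a homomorphism --- is not a routine-but-technical verification; as set up, it cannot be carried out.

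The paper's proof dissolves this obstacle rather than confronting it: the retraction \emph{is} the graph, so being a homomorphism is automatic. Concretely, let $\Gamma$ consist of the standard generators of $M(1,\ldots,1)$ whose last coordinate lies in $B$. Since the relevant congruence is $1_{\mathbb A}$, for every $a\in A$ and every earlier coordinate $\sigma$ there is such a generator with $a$ at $\sigma$ (value $a$ on the hyperface containing $\sigma$, some $b\in B$ on the opposite hyperface); hence the projection $\mathbb{D}$ of $\mu=\langle\Gamma\rangle$ onto the earlier $2^{c+1}-1$ coordinates is subdirect in $\mathbb{A}^{2^{c+1}-1}$, the last coordinates of tuples of $\mu$ constitute exactly $B$ (all generators have last coordinate in the subuniverse $B$, and diagonal tuples with values in $B$ lie in $\Gamma$), and functionality of $M(1,\ldots,1)$ makes $\mu:\mathbb{D}\to\mathbb{B}$ a surjective homomorphism split by the diagonal $b\mapsto (b,\ldots,b)$. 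Note that, contrary to your side remark, the splitting copy of $\mathbb{B}$ here is the naive diagonal: in your $\mathbb{Z}/4\mathbb{Z}$ example the diagonal fails to split only inside your particular two-coordinate $\mathbb{C}$, and the paper's construction instead enlarges the power (to $2^{c+1}-1$ coordinates) until the diagonal itself splits. To repair your proposal you would either have to restrict to Maltsev algebras, where an Aichinger--Mudrinski-style difference calculus is available (losing the unary case), or replace the formula-defined $e$ by the implicit graph construction --- at which point you have reproduced the paper's argument.
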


In the Introduction we described the result in a slightly weaker form,
namely that

\begin{theorem} \label{main-x}
If $\mathbb A$ is an algebra that is 
supernilpotent with respect to the $2$-term
higher commutator, and $\mathbb B$ is a subalgebra
of $\mathbb A$, then $\mathbb B$
is representable as a \emph{quotient}
of a finite subdirect power of $\mathbb A$.
\end{theorem}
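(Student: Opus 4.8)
The plan is to prove the quotient version by induction on the supernilpotence class of $\mathbb A$, exploiting the centrality of the top nonzero higher commutator. Write $F(\mathbb A)$ for the formation generated by $\mathbb A$, that is, the class of homomorphic images of finite subdirect powers of $\mathbb A$; as recalled in the Introduction this is the least formation containing $\mathbb A$, so it is closed under $\mathbb H$, under finite subdirect products, and satisfies $F(\mathbb A')\subseteq F(\mathbb A)$ whenever $\mathbb A'\in F(\mathbb A)$. The goal is therefore to show $\mathbb B\in F(\mathbb A)$ for every subalgebra $\mathbb B\le\mathbb A$.

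Suppose $[\,\underbrace{1,\ldots,1}_{k+1}\,]=0$ and set $\zeta=[\,\underbrace{1,\ldots,1}_{k}\,]$, the top (potentially nonzero) $2$-term higher commutator. By the standard nesting inequality for higher commutators, $[\zeta,1]\le[\,\underbrace{1,\ldots,1}_{k+1}\,]=0$, so $\zeta$ is an abelian, central congruence, while $\mathbb A/\zeta$ satisfies $[\,\underbrace{1,\ldots,1}_{k}\,]=0$ and hence has strictly smaller supernilpotence class. For $\mathbb B\le\mathbb A$ the image of $\mathbb B$ in $\mathbb A/\zeta$, which is isomorphic to $\mathbb B/\zeta|_B$, is a subalgebra of $\mathbb A/\zeta$, so by the inductive hypothesis it lies in $F(\mathbb A/\zeta)\subseteq F(\mathbb A)$. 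What remains is to lift this representation of $\mathbb B/\zeta|_B$ through the central abelian congruence $\zeta|_B$ to a representation of $\mathbb B$ itself.

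The base case ($\mathbb A$ abelian) is the heart of the construction. Here I would use that an abelian algebra is quasi-affine, hence embeds as a subalgebra of a reduct of a module $\mathbf M$ over a suitable ring, and carry out a ``difference'' construction inside the subdirect square. After normalizing a basepoint $b_0\in B$, the set $\mathbb C=\{(x,y)\in\mathbb A\times\mathbb A : x-y+b_0\in B\}$ is a subalgebra of $\mathbb A^2$ that is subdirect (each coordinate projection is onto, using that $B$ is nonempty and closed under the affine term operations of $\mathbb A$), and the affine map $(x,y)\mapsto x-y+b_0$ restricts to a surjective homomorphism $\mathbb C\twoheadrightarrow\mathbb B$. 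This already realizes an abelian $\mathbb B$ as a quotient of a finite (indeed square) subdirect power of $\mathbb A$; checking that the chosen term operation really is a homomorphism onto $\mathbb B$ and that $\mathbb C$ is subdirect is routine module bookkeeping.

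The main obstacle is the inductive gluing step, where I must combine the representation of the base $\mathbb B/\zeta|_B$ with that of the abelian fiber $\zeta|_B$, reconstructing $\mathbb B$ as a central extension of $\mathbb B/\zeta|_B$ by $\zeta|_B$. The plan is to form the fibre product of the two finite subdirect powers over their common quotient and, using that $\zeta$ is central in the strong $2$-term sense, to verify that the resulting algebra both embeds as a subdirect subalgebra of a finite power of $\mathbb A$ and carries a surjection onto $\mathbb B$ that recovers the extension. Centrality is precisely what makes the fiber action affine over the base, so that no obstruction spoils the pullback; this is also where supernilpotence (rather than mere nilpotence) is essential, since it bounds the induction to finitely many steps and keeps each subdirect power finite---consistent with the concluding example, in which a $2$-step nilpotent but non-supernilpotent algebra has a subalgebra that is \emph{not} such a quotient. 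The retract strengthening in Theorem~\ref{main} should then follow by arranging the surjection above to split along the diagonal copy of $\mathbb B$.
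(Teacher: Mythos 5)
There is a genuine gap --- in fact two. First, your base case does not work as written. In a quasi-affine representation the basic operations have the form $f(\mathbf{x})=\sum r_ix_i+c$, so $f(\mathbf{x})-f(\mathbf{y})+b_0$ and $f(x_1-y_1+b_0,\ldots,x_n-y_n+b_0)$ differ by $f(b_0,\ldots,b_0)-b_0$; hence $(x,y)\mapsto x-y+b_0$ is a homomorphism of $\mathbb A$-algebras only when the basepoint $b_0$ is idempotent, and $B$ need not contain an idempotent. Likewise $\mathbb C$ need not be a subuniverse of $\mathbb A^2$: closure comes down to $u-v+b_0\in B$ for $u,v\in B$, i.e.\ closure of $B$ under the \emph{module} difference operation, which is exactly what subalgebras of quasi-affine (as opposed to affine) algebras lack. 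A smaller point: ``abelian implies quasi-affine'' is false for general algebras; what would rescue this step is that $2$-term abelianness is equivalent to quasi-affineness (Kearnes--Szendrei), so at minimum that citation is needed rather than mere abelianness. The paper's construction shows how to avoid the basepoint entirely: in the abelian case it takes the subalgebra of $\mathbb A^4$ generated by all tuples $(u,u,v,v)$ and $(u,v,u,v)$ whose \emph{last} coordinate lies in $B$; the $2$-term condition forces this to be the graph of a homomorphism onto $\mathbb B$ from a subdirect subalgebra of $\mathbb A^3$, so the ``difference operation'' is used only as a functional relation, never as an actual operation.

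Second, and more seriously, your inductive gluing step is exactly the hard part and is left as a plan, not a proof: the $\zeta|_B$-classes are not subalgebras, so there is no ``abelian fiber algebra'' to represent, and the paper's closing example (a $2$-step nilpotent expansion of $\mathbb Z_6$ with a central abelian congruence $\theta$, whose subalgebra $\mathbb B$ is \emph{not} a quotient of any finite subdirect power) shows that centrality of $\zeta$ in the ordinary sense cannot drive such an extension step; you would have to isolate precisely what ``$2$-term centrality'' buys and prove the pullback claim. Moreover, the scaffolding of your induction --- the nesting inequality $[\zeta,1]\le[1,\ldots,1]$ and the claim that $\mathbb A/\zeta$ has strictly smaller supernilpotence class --- uses properties of the $2$-term higher commutator (nesting, behavior under quotients) that are established only under Maltsev-type hypotheses (Aichinger--Mudrinski, Moorhead), whereas the whole point of the theorem, as the paper's discussion stresses, is algebras satisfying \emph{no} nontrivial idempotent Maltsev condition (e.g.\ unary algebras). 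The paper sidesteps all of this with a single non-inductive construction (Theorem~\ref{base}): let $\Gamma$ be the standard generators of the cube algebra $M(1,\ldots,1)\le\mathbb A^{2^{c+1}}$ whose last coordinate lies in $B$; supernilpotence says $M(1,\ldots,1)$, hence $\langle\Gamma\rangle$, is the graph of a function of its first $2^{c+1}-1$ coordinates, which immediately yields a surjection onto $\mathbb B$ from a finite subdirect power of $\mathbb A$, split by the diagonal. To salvage your route you would need both a correct abelian base case and a full development of the extension step --- neither of which is routine.
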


The difference in the two forms is that $\mathbb B$
is a \emph{quotient} of $\mathbb D$ if there is
a surjective homomorphism $\mu: \mathbb D\to \mathbb B$,
while $\mathbb B$
is a \emph{retract} of $\mathbb D$ if there is
a surjective homomorphism $\mu: \mathbb D\to \mathbb B$
which has a homomorphism
$\nu:\mathbb B\to \mathbb D$ that is a right inverse
($\mu\circ\nu=\textrm{id}_{\mathbb B}$).

The main result can be phrased as a theorem about formations of algebras.

\subsection{\sc Classes of algebras.}

We use the following terminology for classes
of algebras of the same type.
\begin{enumerate}
\item A \emph{variety} is a class of algebras
  definable by a set of identities.
\item (Provisional) A \emph{pseudovariety} is a class of finite algebras
closed under
the construction of
homomorphic images,
subalgebras, and finite products.
\item (Provisional) A \emph{formation} is a class of finite algebras
closed under
the construction of
homomorphic images
and finite subdirect products.
\item A class of algebras is \emph{hereditary}
if it is closed under
subalgebras.
\item A class of algebras is \emph{axiomatic}
if it first-order axiomatizable.
\end{enumerate}
In this paper we shall allow
all classes to contain infinite algebras,
so we set aside the two provisional definitions
above (formation, pseudovariety)
and adopt these definitions instead:
a \emph{formation} is \emph{any} class of similar
algebras 
closed under
the construction of
homomorphic images
and finite subdirect products, while a pseudovariety
is \emph{any} class of similar algebras 
closed under
the construction of
homomorphic images, subalgebras, and finite products.

Under either set of definitions, a class is a pseudovariety
if and only if it is a hereditary formation.
Moreover, under the adopted definitions, we have:

\begin{theorem}[Theorem 1.1 \cite{formations}]
  Let $\mathcal V$
  be a class of similar algebraic structures. The following conditions
are equivalent:
\begin{enumerate}
\item $\mathcal V$ is a variety.
\item $\mathcal V$ is closed under homomorphic images, subalgebras and products.
\item $\mathcal V$ is closed under homomorphic images and subdirect products.
\item $\mathcal V$ is closed under homomorphic images,
  subalgebras, finite products,
  and $\mathcal V$ is axiomatizable.
\item $\mathcal V$
  is an axiomatic formation that is closed under subalgebras.
\end{enumerate}
\end{theorem}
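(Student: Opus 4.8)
The plan is to treat condition (2)---closure under $\Hom$, $\Sub$, and $\Prod$---as the hub, and to use the classical equivalence (1)$\Leftrightarrow$(2), Birkhoff's HSP theorem, as the backbone; every other condition will be linked to (2) by an implication in each direction. Throughout, write $\Prod_{\mathrm s}$ for the operator of forming (possibly infinite) subdirect products. Several of the required implications are immediate. For (2)$\Rightarrow$(3), a subdirect product of members of $\mathcal V$ is a subalgebra of a product of members of $\mathcal V$, so $\Sub$- and $\Prod$-closure yield $\Prod_{\mathrm s}$-closure, while $\Hom$-closure is assumed. For (2)$\Rightarrow$(4), a class closed under $\Hom,\Sub,\Prod$ is in particular closed under $\Hom$, $\Sub$, and finite products, and by (1)$\Leftrightarrow$(2) it is a variety, hence axiomatized by its identities. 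The equivalence (4)$\Leftrightarrow$(5) is pure bookkeeping: in the presence of $\Sub$-closure, closure under finite products and closure under finite subdirect products coincide (a finite subdirect product is a subalgebra of a finite product, and a finite product is a subdirect product of itself), and the remaining clauses---$\Hom$-closure, $\Sub$-closure, and axiomatizability---occur verbatim in both.

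For (4)$\Rightarrow$(2) I would upgrade closure under \emph{finite} products to closure under arbitrary products by exploiting axiomatizability. Given $\{A_i\}_{i\in I}\subseteq\mathcal V$, each finite subproduct $P_J:=\prod_{i\in J}A_i$ (with $J\subseteq I$ finite) lies in $\mathcal V$. Fix an ultrafilter $\mathcal U$ on the set of finite subsets of $I$ that contains, for every finite $J_0$, the set $\{J : J\supseteq J_0\}$; such a $\mathcal U$ exists because these sets have the finite intersection property. The assignment sending $a=(a_i)_{i\in I}$ to the $\mathcal U$-class of $(a|_J)_J$ is an embedding of $\prod_{i\in I}A_i$ into the ultraproduct $\prod_J P_J/\mathcal U$: it is a homomorphism since the operations are computed coordinatewise, and it is injective because two tuples differing at some $i_0$ already differ on every $J$ containing $i_0$, a set in $\mathcal U$. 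Since $\mathcal V$ is axiomatic it is closed under ultraproducts by {\L}o\'s's theorem, so $\prod_J P_J/\mathcal U\in\mathcal V$, and as $\mathcal V$ is closed under $\Sub$ we conclude $\prod_{i\in I}A_i\in\mathcal V$. Thus $\mathcal V$ is closed under $\Hom,\Sub,\Prod$, which is (2).

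The remaining implication (3)$\Rightarrow$(2) is the hardest, and I expect it to be the main obstacle. Half of it is easy: a full product is a subdirect product of itself, so $\Prod_{\mathrm s}$-closure already gives $\Prod$-closure, and $\Hom$-closure is assumed; what remains is closure under $\Sub$, for then $\mathcal V=\Hom\Sub\Prod(\mathcal V)$ is a variety by Birkhoff. So let $B\le A\in\mathcal V$; the aim is to realize $B$ as a homomorphic image of a subdirect product of members of $\mathcal V$. The natural attempt is to take the free algebra $F$ of the variety $\Hom\Sub\Prod(\mathcal V)$ on the generating set $B$ together with the canonical surjection $F\twoheadrightarrow B$, and then to argue $F\in\Prod_{\mathrm s}(\mathcal V)\subseteq\mathcal V$, whence $B\in\Hom(F)\subseteq\mathcal V$. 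The difficulty is precisely in the second step: $F$ embeds into a \emph{product} of members of $\mathcal V$ via evaluation homomorphisms, but to place it in $\Prod_{\mathrm s}(\mathcal V)$ one must represent it \emph{subdirectly} over members of $\mathcal V$, and the coordinate projections need not be onto members of $\mathcal V$---the surjective image of an evaluation map is only a subalgebra, which is exactly what we are trying to prove lies in $\mathcal V$. Breaking this apparent circularity is the crux, and it is here that the full force of closure under \emph{infinite} subdirect products must be invoked; the sharp example constructed later in this paper shows that the finite subdirect powers of a single algebra do not suffice. Once $\Sub$-closure is secured, assembling the implications (1)$\Leftrightarrow$(2), (2)$\Leftrightarrow$(3), and (2)$\Rightarrow$(4)$\Leftrightarrow$(5)$\Rightarrow$(2) yields the equivalence of all five conditions.
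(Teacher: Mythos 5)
You have a genuine gap, and you locate it yourself: the implication (3)$\Rightarrow$(2) is never proved. Your reduction is right --- given closure under homomorphic images and arbitrary subdirect products, full products are subdirect products of their factors, so only closure under subalgebras is at stake --- and your diagnosis of why the free-algebra attempt fails (the evaluation maps are not surjective onto members of $\mathcal V$, so the subdirectness requirement circles back to the very $\Sub$-closure being proved) is accurate. But announcing the crux is not resolving it, so as written your argument establishes only the equivalence of (1), (2), (4), (5) together with (2)$\Rightarrow$(3); condition (3) is left dangling. Note that the paper itself gives no proof here --- it quotes Theorem~1.1 of \cite{formations} --- so the comparison is against that source's argument, which does supply the missing construction.

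The missing idea is a single concrete subdirect power: given $B\leq \mathbb A\in\mathcal V$, let $\mathbb D\leq \mathbb A^{\omega}$ consist of all sequences that are \emph{eventually constant with eventual value in $B$}. This is a subuniverse (operations act coordinatewise, and $B$ is closed under them, so the eventual values compose correctly); it is subdirect in $\mathbb A^{\omega}$, since for any $a\in A$ and any coordinate $n$ the sequence taking value $a$ at $n$ and a fixed $b\in B$ elsewhere lies in $\mathbb D$ (subalgebras being nonempty); and the eventual-value map $\mathbb D\to\mathbb B$ is a surjective homomorphism, constant sequences witnessing surjectivity. Hence $\mathbb B$ is a homomorphic image of an (infinite) subdirect power of $\mathbb A$, giving $\Sub$-closure and closing the cycle --- this construction is exactly the infinitary prototype of which the paper's Theorem~\ref{base} is the finite, supernilpotent refinement, and your observation that the paper's $A_5$ example rules out using \emph{finite} subdirect powers here is correct. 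The rest of your proposal is sound: the ultraproduct embedding of $\prod_{i\in I}\mathbb A_i$ into $\prod_J P_J/\mathcal U$ for (4)$\Rightarrow$(2) is the standard and correct use of {\L}o\'s's theorem (injectivity via the sets $\{J: J\ni i_0\}\in\mathcal U$ is right), and (4)$\Leftrightarrow$(5) is indeed bookkeeping, modulo the usual conventions about empty products and trivial algebras that Birkhoff-type statements presuppose.
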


Hence $\mathcal V$ is a variety if and only if it is an axiomatizable
pseudovariety if and only if it is an axiomatizable hereditary formation.

It is not hard to produce formations that are
not hereditary, that is, are not closed under the construction
of subalgebras. For example the formation generated
by the alternating group $A_5$ consists of the groups
isomorphic to finite powers
of $A_5$, so the proper nontrivial subgroups of $A_5$ are
in the pseudovariety generated by $A_5$ but not
in the formation generated by $A_5$.

Using the terminology of formations,
the most obvious corollary of our main theorem
is the following:

\begin{corollary}
  \label{main1}
Let $\mathcal F$ be a formation of algebras whose members are
supernilpotent in the sense of the $2$-term higher commutator.
Then
\begin{enumerate}
\item $\mathcal F$ is hereditary. (Equivalently, $\mathcal F$ is a pseudovariety.)
\item $\mathcal F$ is a variety if and only if $\mathcal F$ is axiomatic.
\end{enumerate}
\end{corollary}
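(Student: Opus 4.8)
The plan is to derive both parts directly from Theorem~\ref{main} together with the cited characterization of varieties, since essentially all of the substantive work is already carried by the main theorem. For part~(1), let $\mathbb A\in\mathcal F$ and let $\mathbb B\leq\mathbb A$; I must show $\mathbb B\in\mathcal F$. As a member of $\mathcal F$, the algebra $\mathbb A$ is supernilpotent with respect to the $2$-term higher commutator, so Theorem~\ref{main} applies and exhibits $\mathbb B$ as a retract---and in particular as a homomorphic image---of some finite subdirect power $\mathbb D$ of $\mathbb A$. Since $\mathcal F$ is a formation it is closed under finite subdirect products, so $\mathbb D\in\mathcal F$; and since $\mathcal F$ is closed under homomorphic images, $\mathbb B\in\mathcal F$. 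Thus $\mathcal F$ is closed under subalgebras, i.e.\ hereditary, and the parenthetical equivalence is then immediate from the fact, recorded above, that a class is a pseudovariety precisely when it is a hereditary formation.

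For part~(2), the forward implication is trivial: a variety is definable by identities, which are first-order sentences, and hence is axiomatic. For the converse, suppose $\mathcal F$ is axiomatic. By hypothesis it is a formation, by part~(1) it is closed under subalgebras, and by assumption it is axiomatic; this is precisely condition~(5) of the quoted Theorem~1.1 of \cite{formations} (``an axiomatic formation that is closed under subalgebras''), which is equivalent to condition~(1), namely that $\mathcal F$ is a variety.

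Because Theorem~\ref{main} is taken as given, no serious obstacle remains: the argument is pure bookkeeping with closure properties. The one point needing care is the passage from the retract conclusion to a homomorphic-image conclusion. Part~(1) in fact uses only the weaker quotient form, Theorem~\ref{main-x}, since heredity follows already from the surjection $\mu\colon\mathbb D\to\mathbb B$ and the right inverse $\nu$ is not needed here. I would also be careful to invoke the correct equivalence within Theorem~1.1 and to confirm that the supernilpotence hypothesis is genuinely in force at the moment Theorem~\ref{main} is applied---which it is, because membership of $\mathbb A$ in $\mathcal F$ supplies it.
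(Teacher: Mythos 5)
Your proof is correct and is precisely the derivation the paper intends (it states the corollary as ``the most obvious corollary'' of Theorem~\ref{main} without writing it out): apply Theorem~\ref{main} (indeed, as you note, the quotient form of Theorem~\ref{main-x} suffices) to get $\mathbb B$ as a homomorphic image of a finite subdirect power lying in $\mathcal F$, then use the recorded equivalence ``pseudovariety $=$ hereditary formation'' for the parenthetical, and condition~(5)~$\Leftrightarrow$~(1) of Theorem~1.1 of \cite{formations} for part~(2). No gaps.
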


Another easily-derived
corollary of Theorem~\ref{main}
which is best stated in terms of classes of algebras
is the following:

\begin{corollary}
  \label{main1.5}
Let $\mathcal V$ be a variety of algebras whose members are
supernilpotent in the sense of the $2$-term higher commutator.
If $\mathbb X\in \mathcal V$ is finite, then any
finite member of the variety
%${\mathcal V}(\mathbb X)$
generated by $\mathbb X$ is representable as a
quotient of a finite subdirect power of $\mathbb X$.
\end{corollary}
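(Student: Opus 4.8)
The plan is to derive this as a formal consequence of Theorem~\ref{main}, the only genuinely new input being that theorem; the corollary itself needs just two routine reductions, one to replace an arbitrary power of $\mathbb X$ by a finite one, and one to recognize a subdirect power of a power of $\mathbb X$ as a subdirect power of $\mathbb X$. Let $\mathcal W$ be the variety generated by $\mathbb X$. Since $\mathbb X\in\mathcal V$ and $\mathcal V$ is a variety, $\mathcal W\subseteq\mathcal V$, so every member of $\mathcal W$ is supernilpotent. Fix a finite $\mathbb Y\in\mathcal W$.

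First I would exhibit $\mathbb Y$ as a quotient of a subalgebra of a \emph{finite} power of $\mathbb X$. As $\mathbb Y$ is finite it is generated by some $k$ elements, and being a $k$-generated member of $\mathcal W$ it is a quotient of the relatively free algebra $\mathbb F:=\mathbb F_{\mathcal W}(k)$, say $\mu_0\colon\mathbb F\twoheadrightarrow\mathbb Y$. Because $\mathbb X$ is finite, the standard coordinatization of relatively free algebras presents $\mathbb F$ as the subalgebra of $\mathbb X^{X^k}$ generated by the $k$ coordinate projections, and here the index set $X^k$ is finite of size $N:=|X|^k$. Thus $\mathbb F\leq\mathbb X^N=:\mathbb A$ with $N$ finite.

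Next I would apply the main theorem to the inclusion $\mathbb F\leq\mathbb A$. Since $\mathcal V$ is closed under finite products, $\mathbb A=\mathbb X^N\in\mathcal V$ is supernilpotent; this is the sole use of the variety hypothesis, and it is what licenses the appeal to the theorem even though we started only from the single algebra $\mathbb X$. By Theorem~\ref{main-x}, $\mathbb F$ is a quotient of some finite subdirect power $\mathbb D\leq\mathbb A^m$ of $\mathbb A$; fix a surjection $\nu\colon\mathbb D\twoheadrightarrow\mathbb F$. Identifying $\mathbb A^m$ with $\mathbb X^{Nm}$, each coordinate projection $\mathbb D\to\mathbb X$ factors as a projection $\mathbb D\to\mathbb A$ (surjective, by subdirectness of $\mathbb D$ in $\mathbb A^m$) followed by a coordinate projection $\mathbb A\to\mathbb X$, hence is surjective; so $\mathbb D$ is in fact a finite subdirect power of $\mathbb X$. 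The composite $\mu_0\circ\nu\colon\mathbb D\twoheadrightarrow\mathbb Y$ then represents $\mathbb Y$ as a quotient of the finite subdirect power $\mathbb D$ of $\mathbb X$, which is exactly what is claimed.

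I do not expect a real obstacle, since all the weight is carried by Theorem~\ref{main}. The two points that genuinely require attention are the finiteness in the first reduction --- it uses finiteness of \emph{both} $\mathbb X$ (so that $X^k$ is a finite index set) and $\mathbb Y$ (so that finitely many generators suffice), and would fail for an infinite $\mathbb Y$ --- and the verification in the last step that subdirectness of $\mathbb D$ in $\mathbb A^m$ transfers to subdirectness in $\mathbb X^{Nm}$.
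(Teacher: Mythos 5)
Your proof is correct and follows essentially the same route as the paper's: present $\mathbb Y$ as a quotient of the relatively free algebra $\mathbb F$, embed $\mathbb F$ in the finite power $\mathbb A=\mathbb X^{N}$, apply Theorem~\ref{main-x} to the pair $(\mathbb A,\mathbb F)$, and compose with a further quotient. Your only additions are harmless refinements the paper leaves implicit --- using $k$ generators rather than $|\mathbb Y|$, and explicitly checking that a subdirect power of $\mathbb X^{N}$ is a subdirect power of $\mathbb X$.
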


\noindent
% ref?: Burris-Sankappanavar p. 85 exercise?
[Idea of proof:] Any finite algebra
$\mathbb Y\in {\mathcal V}(\mathbb X)$
is a quotient of a finite, relatively free algebra
$\mathbb F_{{\mathcal V}(\mathbb X)}(|\mathbb Y|)$.
Since $\mathbb F_{{\mathcal V}(\mathbb X)}(|\mathbb Y|)$
is isomorphic to a subalgebra of
$\mathbb X^{|\mathbb X|^{|\mathbb Y|}}$, we can 
apply Theorem~\ref{main-x} to
$\mathbb A = \mathbb X^{|\mathbb X|^{|\mathbb Y|}}$
and
$\mathbb B = \mathbb F_{{\mathcal V}(\mathbb X)}(|\mathbb Y|)$
to obtain that
$\mathbb F_{{\mathcal V}(\mathbb X)}(|\mathbb Y|)$
is isomorphic to a quotient of a finite
subdirect power of $\mathbb X$.
By taking a further quotient, we get that
$\mathbb Y$ is isomorphic to a quotient of a finite
subdirect power of $\mathbb X$.

\bigskip
  
We shall derive Theorem~\ref{main}
from a slightly more general statement,
Theorem~\ref{base}. Before proving that
result we clarify some of the language
used in Theorem~\ref{main}.

\subsection{\sc $2$-term commutator versus ordinary commutator.}

The theory of the ordinary $1$-term binary
commutator, $[\alpha,\beta]$,
is developed for arbitrary algebraic
structures in Chapter 3 of \cite{hobby-mckenzie}
by Hobby and McKenzie.
The $2$-term binary commutator, $[\alpha,\beta]_2$, was
introduced by Emil Kiss in \cite{kiss}.
Kiss proves in \cite{kiss} that
the $2$-term binary commutator is larger or equal to the
ordinary binary commutator ($[\alpha,\beta]\leq [\alpha,\beta]_2$),
and that the two are equal on any algebra in
a congruence modular variety. In
\cite{kearnes-szendrei}, the equality
of these and other commutators is proved to hold
for any variety satisfying
a nontrivial idempotent Maltsev condition.
These concepts ($2$-term commutator versus ordinary commutator)
for the binary commutator
have analogues for higher arity commutators. The equality of the
ordinary higher commutator and the
$2$-term higher commutator for
congruence modular varieties was proved
by Andrew Moorhead in \cite[Theorem~6.4]{moorhead}.
Moorhead has recently extended this result to
show (in \cite{moorhead_arxiv}) that the
ordinary higher commutator agrees with the 
$2$-term higher commutator in any variety satisfying a
nontrivial idempotent Maltsev condition.
Hence we can rephrase Theorem~\ref{main} to eliminate
the ``$2$-term'' part, provided we restrict
the scope of the theorem
to algebras satisfying a nontrivial idempotent
Maltsev condition. The theorem would then read:

\begin{corollary} \label{main2}
  Assume that $\mathbb A$ is an algebra
  satisfying a nontrivial idempotent Maltsev condition.
If $\mathbb A$ 
is supernilpotent, and $\mathbb B$ is a subalgebra
of $\mathbb A$, then $\mathbb B$
is representable as a retract
%homomorphic image
of a finite subdirect power of $\mathbb A$.

Hence, if $\mathcal F$ is a formation of supernilpotent algebras
in which every member satisfies a nontrivial idempotent
Maltsev condition, then $\mathcal F$ is a pseudovariety.
\end{corollary}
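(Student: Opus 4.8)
The plan is to obtain Corollary~\ref{main2} as a formal consequence of Theorem~\ref{main}, the only extra ingredient being the coincidence of the two higher commutators under the standing hypothesis. First I would record the relevant form of Moorhead's theorem from \cite{moorhead_arxiv}: in any variety satisfying a nontrivial idempotent Maltsev condition, the ordinary higher commutator and the $2$-term higher commutator agree on every tuple of congruences. Since the assertion that $\mathbb A$ satisfies a nontrivial idempotent Maltsev condition is, by the usual convention, an assertion about the variety $\mathcal V$ generated by $\mathbb A$, this agreement holds throughout $\mathcal V$, and in particular on $\mathbb A$ itself.

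Next I would note that supernilpotence is detected on a single tuple of congruences: $\mathbb A$ is supernilpotent (in the ordinary sense) precisely when the ordinary higher commutator of sufficiently many copies of the top congruence $1_{\mathbb A}$ equals the bottom congruence $0_{\mathbb A}$. Because the ordinary and $2$-term higher commutators agree on the tuple $(1_{\mathbb A},\dots,1_{\mathbb A})$, this equation holds for the ordinary commutator if and only if it holds for the $2$-term commutator. Hence ordinary supernilpotence of $\mathbb A$ is equivalent to supernilpotence with respect to the $2$-term higher commutator, and Theorem~\ref{main} applies without change, yielding that $\mathbb B$ is a retract of a finite subdirect power of $\mathbb A$. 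This disposes of the first assertion.

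For the ``Hence'' assertion I would show that $\mathcal F$ is hereditary, since a class is a pseudovariety exactly when it is a hereditary formation. So let $\mathbb B\le\mathbb A$ with $\mathbb A\in\mathcal F$. By hypothesis $\mathbb A$ is supernilpotent and satisfies a nontrivial idempotent Maltsev condition, so the first assertion produces a finite subdirect power $\mathbb D$ of $\mathbb A$ and a surjective homomorphism $\mu\colon\mathbb D\to\mathbb B$ (the retraction). As a finite subdirect product of copies of $\mathbb A\in\mathcal F$, the algebra $\mathbb D$ lies in $\mathcal F$; and as the homomorphic image $\mu(\mathbb D)=\mathbb B$, the subalgebra $\mathbb B$ lies in $\mathcal F$ as well. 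Thus $\mathcal F$ is closed under subalgebras and is therefore a pseudovariety.

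I expect the corollary to be entirely bookkeeping: the only point demanding care is the transfer of the supernilpotence hypothesis between the two commutators, which rests on Moorhead's theorem together with the observation that supernilpotence is read off from the single tuple $(1_{\mathbb A},\dots,1_{\mathbb A})$. The genuine difficulty lies upstream, in the proof of Theorem~\ref{main} itself, which this corollary simply invokes as a black box.
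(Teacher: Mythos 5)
Your proposal is correct and takes essentially the same route as the paper: the corollary is derived there exactly as you do, by invoking Moorhead's theorem from \cite{moorhead_arxiv} that the ordinary and $2$-term higher commutators coincide in any variety satisfying a nontrivial idempotent Maltsev condition (so ordinary supernilpotence, read off the tuple $(1_{\mathbb A},\dots,1_{\mathbb A})$, transfers to $2$-term supernilpotence) and then applying Theorem~\ref{main} as a black box. Your handling of the ``Hence'' clause---showing $\mathcal F$ is hereditary because the subdirect power $\mathbb D$ lies in $\mathcal F$ and $\mathbb B=\mu(\mathbb D)$, and then using the fact that a hereditary formation is a pseudovariety---is likewise the paper's intended bookkeeping, as carried out for Corollary~\ref{main1}.
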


\subsection{\sc Supernilpotence versus ordinary nilpotence.}
Supernilpotence is a form of nilpotence defined
in terms of the higher commutator.
It is proved in \cite{kearnes-szendrei-super}
that supernilpotence implies nilpotence for finite algebras.
From the results of \cite{aichinger-mudrinski,spec},
the exact degree to which supernilpotence
is stronger than nilpotence is well understood
for finite algebras satisfying nontrivial idempotent
Maltsev conditions.
This allows us to rewrite Corollary~\ref{main2}
in a way that refers to nilpotence instead
of supernilpotence:

\begin{corollary}  \label{main3}
Assume that $\mathbb A$ is a finite algebra
satisfying a nontrivial idempotent Maltsev condition.
If $\mathbb A$ is a product of nilpotent algebras
of prime power cardinality and $\mathbb B$ is a subalgebra
of $\mathbb A$, then $\mathbb B$
is representable as a retract
%homomorphic image
of a finite subdirect power of $\mathbb A$.

Hence, if $\mathcal F$ is a formation of finite nilpotent algebras
in which every
member satisfies a nontrivial idempotent
Maltsev condition, and every directly indecomposable member
of $\mathcal F$ has prime power cardinality,
then $\mathcal F$ is a pseudovariety.
\end{corollary}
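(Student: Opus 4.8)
The plan is to deduce Corollary~\ref{main3} from Corollary~\ref{main2} by recognizing that, for finite algebras satisfying a nontrivial idempotent Maltsev condition, the hypothesis ``$\mathbb A$ is a product of nilpotent algebras of prime power cardinality'' is precisely a restatement of supernilpotence. The bridge is the structure theory developed in \cite{aichinger-mudrinski,spec}: in this setting a finite algebra is supernilpotent if and only if it is nilpotent and decomposes as a finite direct product of algebras of prime power cardinality. First I would record this equivalence as the single external input, isolating its two constituent facts — that a finite nilpotent algebra of prime power cardinality (satisfying a nontrivial idempotent Maltsev condition) is supernilpotent, and that a finite direct product of supernilpotent algebras is again supernilpotent, with the supernilpotence class of the product being the maximum of the classes of the factors.

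For the first assertion the argument is then immediate. Given $\mathbb A$ finite, satisfying a nontrivial idempotent Maltsev condition, and presented as a product of nilpotent algebras of prime power cardinality, the two constituent facts combine to make $\mathbb A$ supernilpotent, so Corollary~\ref{main2} applies verbatim and exhibits $\mathbb B$ as a retract of a finite subdirect power of $\mathbb A$.

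For the second assertion I would verify that the formation $\mathcal F$ is hereditary, since a hereditary formation is a pseudovariety. Fix $\mathbb A\in\mathcal F$ and a subalgebra $\mathbb B\leq\mathbb A$; the goal is $\mathbb B\in\mathcal F$. Using the finiteness of $\mathbb A$, write $\mathbb A=\mathbb A_1\times\cdots\times\mathbb A_n$ as a direct product of directly indecomposable algebras. Each projection $\mathbb A\twoheadrightarrow\mathbb A_i$ is a surjective homomorphism, so each $\mathbb A_i$ is a homomorphic image of $\mathbb A$ and hence lies in $\mathcal F$; being a directly indecomposable member of $\mathcal F$ it has prime power cardinality by hypothesis, and being a homomorphic image of the nilpotent algebra $\mathbb A$ it is nilpotent. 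Thus $\mathbb A$ is a product of nilpotent algebras of prime power cardinality, so the first assertion applies and $\mathbb B$ is a retract — in particular a quotient — of some finite subdirect power $\mathbb D$ of $\mathbb A$. Since $\mathcal F$ is closed under finite subdirect products we have $\mathbb D\in\mathcal F$, and since $\mathcal F$ is closed under homomorphic images we conclude $\mathbb B\in\mathcal F$.

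The only nonroutine point is the external structural equivalence between supernilpotence and being a product of prime-power-order nilpotent algebras; everything else is bookkeeping with closure properties. I expect the main obstacle — already resolved in the cited literature rather than here — to be the implication that a finite nilpotent algebra of prime power cardinality satisfying a nontrivial idempotent Maltsev condition is supernilpotent, since it is exactly this direction that converts the elementary-looking hypothesis of Corollary~\ref{main3} into the hypothesis of Corollary~\ref{main2}.
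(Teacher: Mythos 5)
Your proposal is correct and matches the paper's intended derivation: the paper likewise obtains Corollary~\ref{main3} from Corollary~\ref{main2} by invoking the structure theory of \cite{aichinger-mudrinski,spec}, under which (for finite algebras satisfying a nontrivial idempotent Maltsev condition) supernilpotence is equivalent to being nilpotent and a product of algebras of prime power cardinality. Your verification of the formation statement via decomposition into directly indecomposable factors and the closure properties of $\mathcal F$ is the routine bookkeeping the paper leaves implicit, and it is carried out correctly.
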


This consequence of Theorem~\ref{main}
includes Neumann's result, since (i) any group satisfies a nontrivial
idempotent Maltsev condition, and
(ii) a finite,
directly indecomposable, nilpotent group has prime power cardinality.

\subsection{\sc When $\mathbb A$ satisfies no nontrivial
      idempotent Maltsev condition.}
It would be wrong to treat Corollaries~\ref{main2} and \ref{main3}
as if they expressed the essential content of Theorem~\ref{main}.
If $\mathbb A$ is a finite nilpotent algebra,
and $\mathbb A$ satisfies a nontrivial idempotent Maltsev condition, then
it can be shown that
$\mathbb A$ must have a Maltsev term operation
$p(x,y,z)$, i.e. a term operation for which
$\mathbb A\models p(x,x,y)\approx y\approx p(y,x,x)$.
By Corollary~7.4 of \cite{freese-mckenzie},
the nilpotence of $\mathbb A$ forces
$p$ to be invertible in its first and last variables,
so this term operation behaves much like the group term operation
$p(x,y,z)=xy^{-1}z$ which controls 
most of a group's properties. (The term operations
of any group are generated by $p(x,y,z)=xy^{-1}z$
along with the constant $1$.)
In this circumstance
$\mathbb A$ is a ``group-like'' algebra.
In this context, 
Corollaries~\ref{main2}, \ref{main3} are only slight
extensions of what was already known.
The real interest in Theorem~\ref{main}
should be in what it says about formations
with members that do not satisfy
any nontrivial idempotent Maltsev condition,
as in the following corollary.

\begin{corollary}  \label{main4}
  Assume that $\mathbb A$ is an algebra
which has a finite bound on the essential
arity of its term operations.
If $\mathbb B$ is a subalgebra
of $\mathbb A$, then $\mathbb B$
is representable as a retract
%homomorphic image
of a finite subdirect power of $\mathbb A$.

Hence, if $\mathcal F$ is a formation of algebras
and every member
of $\mathcal F$ has a finite bound on the essential
arity of its term operations, 
then $\mathcal F$ is a pseudovariety.
\end{corollary}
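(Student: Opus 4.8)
The plan is to deduce Corollary~\ref{main4} from Theorem~\ref{main}, the only real issue being to connect the hypothesis ``finite bound on the essential arity of term operations'' with ``supernilpotence with respect to the $2$-term higher commutator.'' Once that link is in place, both assertions follow quickly. First I would isolate the following lemma: if every term operation of $\mathbb A$ has essential arity at most $n$, then $\mathbb A$ is supernilpotent with respect to the $2$-term higher commutator; more precisely $[\underbrace{1_{\mathbb A},\dots,1_{\mathbb A}}_{2n+1}]_2=0_{\mathbb A}$, so that $\mathbb A$ is supernilpotent of degree at most $2n$. Granting the lemma, Theorem~\ref{main} immediately yields that every subalgebra $\mathbb B\le\mathbb A$ is a retract of a finite subdirect power of $\mathbb A$, which is the first assertion.

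To prove the lemma I would argue directly from the higher-dimensional term condition that defines the $2$-term higher commutator. Checking $[\underbrace{1,\dots,1}_{k}]_2=0$ amounts to verifying a condition attached to a pair of term operations $t,s$ whose variables are distributed into $k$ disjoint ``control'' blocks $\bar x_1,\dots,\bar x_k$ together with a ``frame'' block $\bar z$ held fixed at a parameter $\bar c$; the condition asserts that the value of the associated $k$-dimensional cube at its top vertex is forced by its values at the remaining vertices. The key observation is that fixing $\bar z=\bar c$ can only decrease essential dependence, so each of the polynomials $t(\bar x_1,\dots,\bar x_k,\bar c)$ and $s(\bar x_1,\dots,\bar x_k,\bar c)$ still has at most $n$ essential variables. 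Hence the essential variables of the two polynomials together number at most $2n$, and as soon as $k\ge 2n+1$ the pigeonhole principle produces a control block $\bar x_{i_0}$ containing no essential variable of either polynomial. Both cubes are then constant along the $i_0$-axis, so each top vertex coincides with a lower vertex and the condition holds trivially; this gives $[\underbrace{1,\dots,1}_{2n+1}]_2=0$.

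I expect the delicate point to be precisely this two-term feature of the commutator: since the defining condition lets the two faces of the cube come from two possibly distinct terms $t$ and $s$, a block that is inessential for $t$ need not be inessential for $s$, so a single ``dead'' direction need not exist among only $n+1$ control coordinates. The remedy is to pass to $2n+1$ coordinates, where the union of the essential variables of $t$ and $s$ is too small to meet every block; since Theorem~\ref{main} requires only supernilpotence of \emph{some} finite degree, the non-optimal bound $2n$ costs us nothing. (For the ordinary one-term higher commutator the same argument with $n+1$ coordinates would already suffice.)

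Finally, the formation statement is a routine consequence of the first assertion. Let $\mathcal F$ be a formation all of whose members have a finite bound on the essential arity of their term operations, and let $\mathbb B\le\mathbb A$ with $\mathbb A\in\mathcal F$. By the first part, $\mathbb B$ is a retract, and in particular a homomorphic image, of a finite subdirect power $\mathbb D$ of $\mathbb A$. As a finite subdirect product of copies of $\mathbb A\in\mathcal F$, the algebra $\mathbb D$ lies in $\mathcal F$, and then $\mathbb B$ lies in $\mathcal F$ because $\mathcal F$ is closed under homomorphic images. Thus $\mathcal F$ is closed under subalgebras, that is, it is hereditary, and a hereditary formation is a pseudovariety, as recorded above.
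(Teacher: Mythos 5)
Your proposal is correct, and it takes essentially the paper's route: the paper disposes of this corollary in a single sentence, observing that an algebra whose term operations have essential arity at most $k$ is supernilpotent of class at most $k$ with respect to the $2$-term higher commutator and then invoking Theorem~\ref{main}, and your pigeonhole argument (together with your routine derivation of the pseudovariety statement, which matches Corollary~\ref{main1}) is exactly the intended justification of that observation. The one divergence is quantitative, and your stated worry about the two-term feature is more cautious than necessary. You inflate the number of directions to $2n+1$ so that the two terms $t$ and $s$ share a \emph{common} dead direction, obtaining class $2n$, whereas the paper asserts class at most $n$; the sharper bound is in fact provable without a common dead direction. Indeed, suppose $s,t\in M(1,\dots,1)$ agree at all earlier coordinates of the $(n+1)$-dimensional cube, that the tuple $s$ is constant along direction $j_0$ and the tuple $t$ is constant along direction $j_1$ (each such direction exists by pigeonhole applied to that term alone, since $n+1$ blocks exceed the at most $n$ essential variables of a single term). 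If $j_0=j_1$ the conclusion is immediate; otherwise let $\sigma'$, $\sigma''$, $\tau$ denote the top address $(1,\dots,1)$ with the $j_0$-bit, the $j_1$-bit, and both bits changed to $0$, respectively, all of which are earlier coordinates. Then
\[
t_{(1,\dots,1)} = t_{\sigma''} = s_{\sigma''} = s_{\tau} = t_{\tau} = t_{\sigma'} = s_{\sigma'} = s_{(1,\dots,1)},
\]
alternating constancy of one tuple along its own dead direction with agreement at earlier coordinates. So class $n$ suffices, as the paper claims; but since Theorem~\ref{main} requires only supernilpotence of \emph{some} finite class, your weaker bound of $2n$ costs nothing for the corollary, exactly as you note.
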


This corollary is a consequence of Theorem~\ref{main},
since any algebra of essential arity at most $k$
will be supernilpotent of class at most $k$ in the sense of the
$2$-term higher commutator.

Even the following consequence of Corollary~\ref{main4}
seems to be new:

\begin{corollary}  \label{main5}
Any formation of unary algebras is a pseudovariety.
\end{corollary}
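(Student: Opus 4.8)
The plan is to obtain Corollary~\ref{main5} as an immediate special case of Corollary~\ref{main4}. The one thing requiring verification is that every unary algebra satisfies the hypothesis of Corollary~\ref{main4}, namely that it has a finite bound on the essential arity of its term operations.

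First I would check that in a unary algebra---one all of whose basic operations have arity at most one---every term operation has essential arity at most one. This follows by a short induction on the construction of a term $t$. If $t$ is a single variable, then the term operation it induces is a projection, which depends on exactly one argument. If $t$ is a nullary basic operation (a constant), then it depends on no arguments. Finally, if $t=f(s)$ for a basic unary operation $f$ and a subterm $s$, then $t$ depends on precisely the arguments that $s$ depends on, which by the inductive hypothesis number at most one. In every case the essential arity is at most one.

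Consequently the essential arities of the term operations of any unary algebra are bounded by $1$, which is certainly a finite bound. Thus each member of a formation $\mathcal F$ of unary algebras satisfies the hypothesis of Corollary~\ref{main4}, and that corollary yields at once that $\mathcal F$ is closed under subalgebras, i.e.\ that $\mathcal F$ is a pseudovariety.

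I do not expect any genuine obstacle in this argument: the mathematical content is entirely contained in Corollary~\ref{main4} (and hence ultimately in Theorem~\ref{main}), and Corollary~\ref{main5} is merely the case in which the uniform essential-arity bound is visibly equal to~$1$. The only point deserving a moment's care is the bookkeeping over constants and projections in the inductive step, but neither of these pushes the essential arity above one.
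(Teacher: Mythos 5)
Your proposal is correct and follows exactly the paper's intended route: the paper presents Corollary~\ref{main5} as an immediate consequence of Corollary~\ref{main4}, and your inductive verification that every term operation of a unary algebra has essential arity at most $1$ is precisely the (unstated) bookkeeping that justifies this specialization.
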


\section{The proof of Theorem~\ref{main}}

We begin this section by defining what it means for
a congruence to be supernilpotent of class $c=2$
with respect to the $2$-term higher commutator,
and then we indicate how to modify the definition
for other values of $c$.

Let $\theta$ be a congruence on an algebra $\mathbb A$.
We shall define a subalgebra $M(\theta,\theta,\theta)$
of $\mathbb A^{2^3}$ by indicating its generating $2^3$-tuples.
The \emph{coordinates} of a $2^3$-tuple are the elements of the set
$2^3=\{0,1\}^3$. Since a coordinate of a tuple $t\in\mathbb A^{2^3}$
is itself a tuple $(a,b,c)\in 2^3$,
and since we dread the prospect of referring
to ``coordinates of coordinates'', we shall refer
to an element $(a,b,c)\in 2^3$ as an ``address of a coordinate''
if we are treating it as a string of $0$'s and $1$'s,
and we wish to refer to different places in the string.

We arrange the coordinates in the shape of a $3$-dimensional cube
by stipulating that two coordinates are adjacent if the
Hamming distance of their addresses is $1$.
See Figure~\ref{eight_coords}.

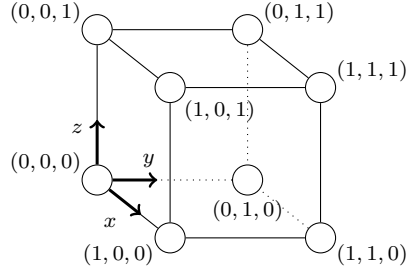
\begin{figure}[!htbp]
\begin{tikzpicture}[z={(4.85mm,-3.85mm)},every node/.style={minimum size=0.4cm,inner sep=0cm,font=\tiny},every circle node/.style={draw}]
  \node [circle,label={175:$(0,0,0)$}] (a) at (0,0,0) {};
  \node [circle,label={175:$(0,0,1)$}] (b) at (0,2,0) {} edge (a);
  \node [circle,label={270:$(0,1,0)$}] (c) at (2,0,0) {} edge [dotted] (a);
  \node [circle,label={5:$(0,1,1)$}] (d) at (2,2,0) {} edge (b) edge [dotted] (c);
  \node [circle,label={185:$(1,0,0)$}] (e) at (0,0,2) {} edge (a);
  \node [circle,label={330:$(1,0,1)$}] (f) at (0,2,2) {} edge (b) edge (e);
  \node [circle,label={355:$(1,1,0)$}] (g) at (2,0,2) {} edge [dotted] (c) edge (e);
  \node [circle,label={5:$(1,1,1)$}] (h) at (2,2,2) {} edge (d) edge (f) edge (g);
  \draw [->,line width=0.4mm] (a) -- (0,0,1.2);
  \draw [->,line width=0.4mm] (a) -- (0,0.8,0);
  \draw [->,line width=0.4mm] (a) -- (0.8,0,0);
  \node [label={185:$x$},minimum size=0.1cm] (x) at (0,0,0.9) {};
  \node [label={95:$y$},minimum size=0.1cm] (y) at (0.9,0,0) {};
  \node [label={183:$z$},minimum size=0.1cm] (z) at (0,0.9,0) {};
\end{tikzpicture}
\bigskip
\begin{center}
\caption{\sc Eight coordinates, arranged by adjacency of address.} \label{eight_coords}
\end{center}
\end{figure}

We have labeled the first, second, and third
spatial axes of the cube
with the letters $x$, $y$, and $z$.
If we move from one address to another in the
$x$-direction, then the address changes only in its
first place, i.e., addresses of the form
$(0,b,c)$ change to $(1,b,c)$. Similarly, 
if we move from one address to another in the $y$- or
$z$-directions, then the address changes only in its
second or third places. The \emph{last coordinate} among all
$2^3$ coordinates will be the coordinate with address $(1,1,1)$.
The \emph{earlier coordinates} will be all coordinates that are not the last
coordinate.

The \emph{standard generators of $M(\theta,\theta,\theta)$
in the $x$-direction} are all tuples in $\mathbb A^{2^3}$
which have the form indicated in Figure~\ref{standard_gen},
where $(u,v)\in\theta$.
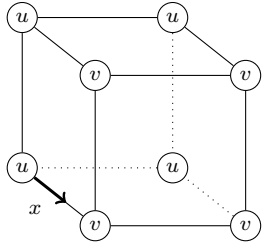
\begin{figure}[!htbp]
\begin{tikzpicture}[z={(4.85mm,-3.85mm)},every node/.style={minimum size=0.4cm,inner sep=0cm,font=\tiny},every circle node/.style={draw}]
  \node [circle] (a) at (0,0,0) {$u$};
  \node [circle] (b) at (0,2,0) {$u$} edge (a);
  \node [circle] (c) at (2,0,0) {$u$} edge [dotted] (a);
  \node [circle] (d) at (2,2,0) {$u$} edge (b) edge [dotted] (c);
  \node [circle] (e) at (0,0,2) {$v$} edge (a);
  \node [circle] (f) at (0,2,2) {$v$} edge (b) edge (e);
  \node [circle] (g) at (2,0,2) {$v$} edge [dotted] (c) edge (e);
  \node [circle] (h) at (2,2,2) {$v$} edge (d) edge (f) edge (g);
  \draw [->,line width=0.4mm] (a) -- (0,0,1.2);
  \node [label={185:$x$},minimum size=0.1cm] (x) at (0,0,0.9) {};
\end{tikzpicture}
\begin{center}
\caption{\sc A standard generator in the $x$-direction.}  \label{standard_gen}
\end{center}
\end{figure}
That is, for each pair $(u,v)\in\theta$
there is a standard generator of $M(\theta,\theta,\theta)$
in the $x$-direction,
which is a $2^3$-tuple $t$, where at every address $\sigma = (x,y,z)$
with $x=0$ we have coordinate value $t_{\sigma}=u$ and at every address
$\sigma$ with $x=1$ we have coordinate value $t_{\sigma}=v$.
(Visually, $t_{\sigma} = u$ on the 
``\emph{$x=0$ hyperface}'' of the cube, while
$t_{\sigma} = v$ on the 
``\emph{$x=1$ hyperface}''.)
Define standard generators in the $y$- and $z$-directions
analogously.

$M(\theta,\theta,\theta)$ is defined to be the subalgebra
of $\mathbb A^{2^3}$ that is generated by the standard
generators in all directions. This subalgebra contains the diagonal
of $\mathbb A^{2^3}$, since the constant tuples ( = diagonal elements)
of $\mathbb A^{2^3}$
are standard
generators in each direction.

A congruence $\theta$ is called \emph{supernilpotent}
(of class $2$) with respect to the $2$-term higher commutator
if whenever two elements $s, t\in M(\theta,\theta,\theta)$
agree at all earlier coordinates, then they also
agree at the last coordinate.
This can be expressed in many different ways, e.g.:
\begin{enumerate}
\item (The definition)
  If $s, t\in M(\theta,\theta,\theta)$ and
$s_{\sigma}=t_{\sigma}$ for all $\sigma\in\{0,1\}^3$
other than $\sigma = (1,1,1)$, then we also have
$s_{(1,1,1)}=t_{(1,1,1)}$.
\item $M(\theta,\theta,\theta)$ is the graph
  of a function whose domain is the projection
  of $M(\theta,\theta,\theta)$ onto
  the earlier coordinates and whose range is
  the projection of $M(\theta,\theta,\theta)$ onto
the last coordinate.
\item The projection of $M(\theta,\theta,\theta)$
  onto its seven earlier coordinates is a bijection.
\item (Since $M(\theta,\theta,\theta)$ is a subalgebra
  of $\mathbb A^{2^3}$):
  $M(\theta,\theta,\theta)$ is the graph of a partial
homomorphism $\mu: \mathbb A^7\to \mathbb A$.
I.e., $M(\theta,\theta,\theta)$ is the graph
of a homomorphism  $\mu: \mathbb D\to \mathbb A$
for $\mathbb D\;(\leq \mathbb A^7)$ equal to the projection
of $\mathbb A^{2^3}$ to its seven earlier coordinates.
\end{enumerate}

The only difference between supernilpotence of class $2$
and supernilpotence of class $c$ is that for class
$c$ we use $(c+1)$-dimensional hypercubes in place of
$3$-dimensional cubes. The vertices of these cubes
have addresses in $\{0,1\}^{c+1}$,
$x_1$- through $x_{c+1}$-spatial directions,
$M(\theta,\ldots,\theta)$ is generated by standard
generators in all of the directions, and
$\theta$ is supernilpotent of class $c$ with respect
to the $2$-term higher commutator if the last
coordinate of any tuple in
$M(\theta,\ldots,\theta)$ depends functionally
on the earlier coordinates.

For more about supernilpotence and higher commutator
theory see \cite{aichinger-mudrinski,moorhead}.

We need one more definition before proceeding.
If $\mathbb A$ is an algebra,
$B\subseteq A$ is a subset,
and $\theta\in\Con(\mathbb A)$ is a congruence,
then
\[
B^{\theta} = \bigcup_{b\in B} b/\theta
= \{a\in A \mid (a,b)\in\theta \text{ for some } b \in B \}
  \]
  is the \emph{saturation} of
  $B$ by $\theta$.
The saturation of $B$ by $\theta$
is the least subset of
  $\mathbb A$ containing $B$ that is a union
  of $\theta$-classes.
  If $B$ is a subuniverse of $\mathbb A$,
  then it can be shown that $B^{\theta}$ is also a subuniverse.
  
\begin{theorem} \label{base}
Let $\mathbb A$ be an algebra,
and let $\mathbb B$ be a subalgebra of $\mathbb A$ and
$\theta\in\Con(\mathbb A)$.
If
\begin{enumerate}
\item $\mathbb B^{\theta} = \mathbb A$, 
\item $\theta$ is supernilpotent with respect
to the $2$-term higher commutator,
\end{enumerate}
then $\mathbb B$ is a retract of a finite subdirect
power of $\mathbb A$.
\end{theorem}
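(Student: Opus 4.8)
The plan is to build the retraction directly out of the partial homomorphism that supernilpotence provides, taking the section to be a diagonal map and the retraction to be that partial homomorphism restricted to a cleverly chosen subdirect power. Fix $c$ with $\theta$ supernilpotent of class $c$, put $N=2^{c+1}-1$, and let $M=M(\theta,\dots,\theta)\le \mathbb{A}^{2^{c+1}}$ be the subalgebra generated by the standard generators in all $c+1$ directions. By hypothesis~(2) and the reformulation analogous to item~(4) of the class-$2$ definition, the projection of $M$ onto its $N$ earlier coordinates is a bijective homomorphism, hence an isomorphism, onto a subalgebra $\mathbb{D}\le\mathbb{A}^N$; composing its inverse with the projection to the last coordinate yields a homomorphism $\mu\colon\mathbb{D}\to\mathbb{A}$ whose graph is $M$. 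Since the diagonal of $\mathbb{A}^{2^{c+1}}$ lies in $M$, the map $\mu$ fixes the diagonal: $\mu(a,\dots,a)=a$ for every $a\in A$.

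My candidate for the finite subdirect power is $\mathbb{C}:=\mu^{-1}(\mathbb{B})=\{\,t\in\mathbb{D}:\mu(t)\in B\,\}$, which is a subalgebra of $\mathbb{D}\le\mathbb{A}^N$ because it is the preimage of the subuniverse $B$ under the homomorphism $\mu$. I would take the retraction to be $\pi:=\mu|_{\mathbb{C}}\colon\mathbb{C}\to\mathbb{B}$ and the section to be the diagonal map $\iota\colon\mathbb{B}\to\mathbb{C}$, $b\mapsto(b,\dots,b)$. Three facts are then immediate from the diagonal lying inside $M$: the diagonal tuple $(b,\dots,b)$ belongs to $\mathbb{D}$ and satisfies $\mu(b,\dots,b)=b\in B$, so $\iota$ is a well-defined homomorphism into $\mathbb{C}$; consequently $\pi\circ\iota=\mathrm{id}_{\mathbb{B}}$; and since every $b\in B$ is reached in this way, $\pi$ is onto. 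Thus $\mathbb{B}$ is a retract of $\mathbb{C}$.

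What remains — and the step I would treat most carefully — is to show that $\mathbb{C}$ is a \emph{subdirect} power of $\mathbb{A}$, that is, that each of the $N$ coordinate projections $\mathbb{C}\to\mathbb{A}$ is onto; this is exactly where hypothesis~(1), $\mathbb{B}^{\theta}=\mathbb{A}$, enters. Fix an earlier coordinate $\sigma\in\{0,1\}^{c+1}$ and an element $a\in A$. By $\mathbb{B}^{\theta}=\mathbb{A}$ there is some $b\in B$ with $(a,b)\in\theta$. Since $\sigma\neq(1,\dots,1)$, some address-place $i$ has $\sigma_i=0$; let $g$ be the standard generator in the $x_i$-direction for the pair $(a,b)\in\theta$. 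Then $g$ takes the value $a$ at every coordinate whose $i$-th place is $0$ — in particular $g_{\sigma}=a$ — and the value $b$ at every coordinate whose $i$-th place is $1$ — in particular at the last coordinate $(1,\dots,1)$. Hence the projection $\bar g\in\mathbb{D}$ of $g$ to the earlier coordinates satisfies $\mu(\bar g)=g_{(1,\dots,1)}=b\in B$, so $\bar g\in\mathbb{C}$, while $\bar g_{\sigma}=a$. This witnesses surjectivity of the $\sigma$-projection, completing the verification that $\mathbb{C}$ is a subdirect power of $\mathbb{A}$.

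The only genuine work is thus the bookkeeping of the previous paragraph, which I expect to be the main (though mild) obstacle: one must choose the generator's direction so that the prescribed coordinate $\sigma$ lies on the ``$u$-hyperface'' while the last coordinate lies on the ``$v$-hyperface'', and then use $\mathbb{B}^{\theta}=\mathbb{A}$ to force the $v$-value into $B$; everything else is formal. Note finally that $N=2^{c+1}-1$ is finite precisely because supernilpotence is nilpotence of some finite class $c$, so $\mathbb{C}$ is indeed a \emph{finite} subdirect power, and the argument is uniform in $c$.
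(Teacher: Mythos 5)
Your proof is correct and follows essentially the same route as the paper: both exploit the functionality of $M(\theta,\dots,\theta)$ to get a homomorphism from the earlier coordinates to the last, use the diagonal as the section, and verify subdirectness via a standard generator carrying $a$ on the hyperface through $\sigma$ and a $\theta$-related $b\in B$ on the hyperface through the last coordinate. The only (immaterial) difference is that you cut down to $\mu^{-1}(\mathbb B)$ inside the full projection of $M$, while the paper instead generates a subalgebra from the restricted set $\Gamma$ of standard generators whose last coordinate lies in $B$; both devices ensure the last coordinates land exactly in $B$ and admit the same witnesses.
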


\begin{proof}
  We shall draw pictures as if we working in three
  spatial dimensions, but it will be clear that the arguments
  we give work in dimension $c+1$ for any
  supernilpotence class $c$.

Let $\Gamma$ be the subset of the standard generators of
$M(\theta,\ldots,\theta)$ consisting of
only those standard generators (in all directions)
where the value in the last coordinate
lies in the subalgebra $\mathbb B$, as indicated in Figure~\ref{Gamma}.

\begin{figure}[!htbp]
\begin{tikzpicture}[z={(4.85mm,-3.85mm)},every node/.style={minimum size=0.4cm,inner sep=0cm,font=\tiny},every circle node/.style={draw}]
  \node [circle] (a) at (0,0,0) {};
  \node [circle] (b) at (0,2,0) {} edge (a);
  \node [circle] (c) at (2,0,0) {} edge [dotted] (a);
  \node [circle] (d) at (2,2,0) {} edge (b) edge [dotted] (c);
  \node [circle] (e) at (0,0,2) {} edge (a);
  \node [circle] (f) at (0,2,2) {} edge (b) edge (e);
  \node [circle] (g) at (2,0,2) {} edge [dotted] (c) edge (e);
  \node [circle,label={0:$~\in B$}] (h) at (2,2,2) {$b$} edge (d) edge (f) edge (g);
\end{tikzpicture}

\begin{center}
\caption{\sc A typical element of $\Gamma$.}\label{Gamma}  
\end{center}
\end{figure}
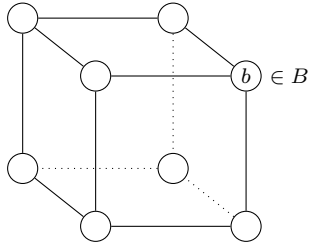

\begin{claim} \label{base_claim}
\mbox{}
  
\begin{enumerate}
\item Every tuple in $\Gamma$ has last coordinate in $\mathbb B$.
\item For any $b\in B$, $\Gamma$ contains a tuple 
  whose last coordinate is $b$.
\item For any $a\in A$ and any earlier coordinate $\sigma$,
  $\Gamma$ contains a tuple $t$ such that
  $t_{\sigma}=a$.
\end{enumerate}
\end{claim}

Item (1) of Claim~\ref{base_claim} is part of the
definition of $\Gamma$.

Item (2) of Claim~\ref{base_claim} follows from the fact
that $\Gamma$ contains all of the diagonal tuples
with diagonal value in $\mathbb B$.

For Item (3) of Claim~\ref{base_claim}, fix any $a\in A$
and any coordinate $\sigma$ other than the last coordinate.
Since $\mathbb B^{\theta} = \mathbb A$, there exists some
$b\in \mathbb B$ such that $(a,b)\in\theta$.
We explain why there is a tuple $t\in \Gamma$
which has $b$ in the last coordinate and $a$
in coordinate $\sigma$.

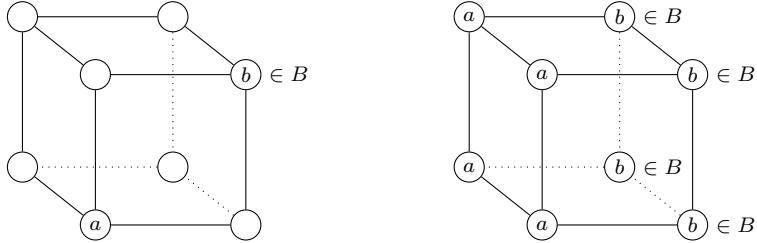
\begin{figure}[!htbp]
\begin{tikzpicture}[z={(4.85mm,-3.85mm)},every node/.style={minimum size=0.4cm,inner sep=0cm,font=\tiny},every circle node/.style={draw}]
  \node [circle] (a) at (0,0,0) {};
  \node [circle] (b) at (0,2,0) {} edge (a);
  \node [circle] (c) at (2,0,0) {} edge [dotted] (a);
  \node [circle] (d) at (2,2,0) {} edge (b) edge [dotted] (c);
  \node [circle] (e) at (0,0,2) {$a$} edge (a);
  \node [circle] (f) at (0,2,2) {} edge (b) edge (e);
  \node [circle] (g) at (2,0,2) {} edge [dotted] (c) edge (e);
  \node [circle,label={0:$~\in B$}] (h) at (2,2,2) {$b$} edge (d) edge (f) edge (g);
\end{tikzpicture}
\hspace{4em}
\begin{tikzpicture}[z={(4.85mm,-3.85mm)},every node/.style={minimum size=0.4cm,inner sep=0cm,font=\tiny},every circle node/.style={draw}]
  \node [circle] (a) at (0,0,0) {$a$};
  \node [circle] (b) at (0,2,0) {$a$} edge (a);
  \node [circle,label={0:$~\in B$}] (c) at (2,0,0) {$b$} edge [dotted] (a);
  \node [circle,label={0:$~\in B$}] (d) at (2,2,0) {$b$} edge (b) edge [dotted] (c);
  \node [circle] (e) at (0,0,2) {$a$} edge (a);
  \node [circle] (f) at (0,2,2) {$a$} edge (b) edge (e);
  \node [circle,label={0:$~\in B$}] (g) at (2,0,2) {$b$} edge [dotted] (c) edge (e);
  \node [circle,label={0:$~\in B$}] (h) at (2,2,2) {$b$} edge (d) edge (f) edge (g);
\end{tikzpicture}

\begin{center}
\caption{\sc Partial description of $t$ (left) versus total description of $t$ (right).}  \label{part_desc}
\end{center}  
\end{figure}

Split the hypercube into two parallel hyperfaces
in a way the puts the last coordinate in a different
hyperface than coordinate $\sigma$. Let $t$
be the standard generator whose coordinate value is $b$
in all coordinates of the hyperface containing
the last coordinate, and is $a$ 
in all coordinates of the hyperface containing
coordinate $\sigma$. (See Figure~\ref{part_desc}.)
This $t\in \Gamma$ satisfies
the condition in Item (3) of Claim~\ref{base_claim}.

Now we conclude the proof of the theorem.
Let $\mu = \langle \Gamma\rangle$ be the subalgebra
of $\mathbb A^{2^{c+1}}$ that is generated by
$\Gamma$. Because $\Gamma\subseteq M(\theta,\ldots,\theta)$,
we get that $\mu$ is a subalgebra
of $M(\theta,\ldots,\theta)$.
Since $M(\theta,\ldots,\theta)$ is a functional
relation from its first $2^{c+1}-1$ coordinates
to its last coordinate, $\mu$ is also a functional
relation.

Items (1) and (2) of Claim~\ref{base_claim}
guarantee that every element of $\mathbb B$
and only elements of $\mathbb B$ appear in
last coordinates of tuples of $\mu$.
If $\mathbb D\leq \mathbb A^{2^{c+1}-1}$ is the projection
of $\mu$ onto its first $2^{c+1}-1$ coordinates,
Item (3) of Claim~\ref{base_claim} guarantees
that $\mathbb D\leq \mathbb A^{2^{c+1}-1}$ is
a subdirect product representation of $\mathbb D$.
Then $\mu: \mathbb D\to \mathbb B$ is a representation
of $\mathbb B$ as a homomorphic image of a
finite subdirect power of $\mathbb A$.
This homomorphism has a right inverse $\nu:\mathbb B\to \mathbb D$
which maps $b\in\mathbb B$ to the diagonal tuple
in $\mathbb D$ with diagonal value $b$.
This represents $\mathbb B$ as a retract of finite
subdirect power of $\mathbb A$.
\end{proof}

\bigskip

{\it Proof of Theorem~\ref{main}.}
Apply Theorem~\ref{base}
in the setting where $\theta = 1_{\mathbb A}$
is the universal congruence on $\mathbb A$.  $\Box$

\section{An example}

Let $\mathbb A=\langle A; +, s, c\rangle$
be an algebra of signature $\langle 2, 1, 0\rangle$
whose universe
is $\mathbb Z_6 = \{0,1,2,3,4,5\}$ = integers modulo $6$.
The operations on $\mathbb A$ are defined by the following tables.
\[
\begin{array}{|c||cccccc|}
  \hline
+^{\mathbb A}& 0 &1 &2 &3 &4 &5\\
  \hline
  \hline
0& 0 &1 &2 &3 &4 &5\\
1&1 &2 &3 &4 &5 & 0\\
2&2 &3 &4 &5 &0 & 1\\
3&3 &4 &5 &0 &1 &2\\
4&4 &5 &0 &1 &2 &3\\
5&5 &0 &1 &2 &3 &4\\
  \hline
\end{array}
\quad\quad\quad
\begin{array}{|c||cccccc|}
  \hline
s^{\mathbb A}& 0 &1 &2 &3 &4 &5\\
  \hline
  \hline
& 0 &3 &3 &0 &3 &3\\
  \hline
\end{array}
\quad\quad\quad
c^{\mathbb A} = 3.
\]
The $+$-operation is addition modulo $6$.
We shall use the symbol $0$ to denote
the constant term $c+c$.

The subalgebra lattice
and 
the congruence lattice 
of $\mathbb A$
are depicted in Figure~\ref{SubCon}.
For each congruence we indicate the partition
into congruence classes.
We also label the congruence lattice with indices
of covering pairs.

\begin{figure}[!htbp]
\hspace{2cm}
\begin{tikzpicture}[every circle node/.style={draw,minimum size=0.2cm,inner sep=0cm,font=\tiny}]
\begin{scope}[yshift=2.5ex]
  \node [circle,label={180:$\mathbb A$}] (1) at (0,2) {};
  \node [circle,label={180:$\mathbb B$}] (0) at (0,0) {} edge (1);
\end{scope}

\begin{scope}[xshift=10em]
  \node [circle,label={0:$1$: \footnotesize $0\,1\,2\,3\,4\,5$}] (1) at (0,3) {};
  \node [label={0:\footnotesize $[1:\theta]=3$}] (l1) at (0.5,2.25) {};
  \node [circle,label={0:$\theta$: \footnotesize $0\,3 \mid 1\,4 \mid 2\,5$}] (theta) at (0,1.5) {} edge (1);
  \node [label={0:\footnotesize $[\theta:0]=2$}] (l2) at (0.5,0.75) {};
  \node [circle,label={0:$0$: \footnotesize $0 \mid 1 \mid 2 \mid 3 \mid 4 \mid 5$}] (0) at (0,0) {} edge (theta);
\end{scope}
\end{tikzpicture}

\begin{center}
\caption{\sc $\textrm{Sub}(\mathbb A)$ and $\textrm{Con}(\mathbb A)$.}\label{SubCon}  
\end{center}
\end{figure}
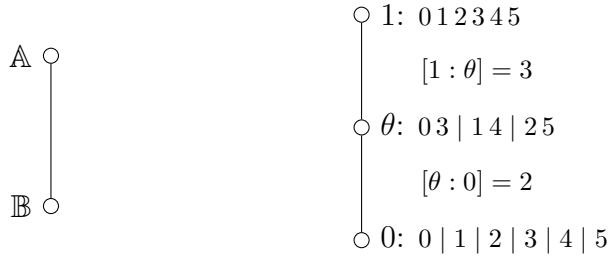
    
\noindent
The only proper subuniverse of $\mathbb A$ is
$B = \{0,3\}$, and the subalgebra supported by this
set is $\mathbb B = \langle \{0,3\}; +, s, c\rangle$
where the operations are given by the tables
\[
\begin{array}{|c||cc|}
  \hline
+^{\mathbb B}& 0 &3\\
  \hline
  \hline
0& 0 &3\\
3& 3 &0\\
  \hline
\end{array}
\quad\quad\quad
\begin{array}{|c||cc|}
  \hline
s^{\mathbb B}& 0 &3\\
  \hline
  \hline
& 0 &0\\
  \hline
\end{array}
\quad\quad\quad
c^{\mathbb B} = 3.
\]
Our goal in the section is to prove the following.

\begin{theorem} \label{example}
  \mbox{}
\begin{enumerate}
\item  $\mathbb A$ is an expansion of a finite group.
\item  $\mathbb A$ is $2$-step nilpotent as an algebraic structure,
  but it is not supernilpotent.
\item  The subalgebra $\mathbb B\leq \mathbb A$ is not
representable as a homomorphic image
of a finite subdirect power of $\mathbb A$.
\end{enumerate}
\end{theorem}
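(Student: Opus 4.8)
The plan is to dispatch the three parts in increasing order of difficulty, spending the real effort on part (3).

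Part (1) is immediate: the reduct $\langle\mathbb{Z}_6;+\rangle$ is the cyclic group of order $6$, its identity is the term $c+c=0$, and its inverse is the term $-x=x+x+x+x+x$ (since $5\equiv-1\pmod 6$). Hence the group operations are term operations of $\mathbb{A}$, so $\mathbb{A}$ is an expansion of a finite group; in particular it has the Mal'cev term $p(x,y,z)=x-y+z$ and lies in a congruence permutable (hence modular) variety, so commutator theory applies.

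For part (2) I would first record a normal form: every term operation of $\mathbb{A}$ can be written as $t(\bar x)=\ell_0(\bar x)+\sum_j s(\ell_j(\bar x))$, where each $\ell_j$ is a group word (a $\mathbb{Z}$-linear combination of variables plus a constant). This holds because $s$ is unary with output in $\{0,3\}$, and $0,3\equiv 0\pmod 3$, so the argument of any $s$ matters only modulo $3$ and may be replaced by its linear part. Now $\theta$ is exactly the kernel of reduction mod $3$ (classes $\{0,3\},\{1,4\},\{2,5\}$), $\mathbb{A}/\theta\cong\mathbb{Z}_3$ carries only a constant $s$ and a constant $c$, hence is abelian, giving $[1_{\mathbb A},1_{\mathbb A}]\le\theta$. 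To obtain $[1_{\mathbb A},\theta]=0$ I verify the term condition $C(1,\theta;0)$: if $\bar u\mathrel\theta\bar v$ then $\ell_j(\bar u)\equiv\ell_j(\bar v)\pmod 3$, so each $s$-summand agrees and $t(a,\bar u)-t(a,\bar v)=\ell_0(\bar u)-\ell_0(\bar v)$ is independent of the displayed variable $a$; centrality follows. These give nilpotence of class $\le 2$, and the class is exactly $2$ since $t(x,y)=s(x+y)$ already violates $C(1,1;0)$ (at $a=1,b=0,u=0,v=1$ one has $s(1)=s(2)=3$ but $s(0)=0\ne 3=s(1)$). For the failure of supernilpotence I would invoke the structure theorem for finite supernilpotent Mal'cev algebras from \cite{aichinger-mudrinski}, namely that such an algebra is a direct product of algebras of prime power order. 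Since $\Con(\mathbb{A})$ is the chain $0<\theta<1$, $\mathbb{A}$ is directly indecomposable, so supernilpotence would force $\mathbb{A}$ itself to have prime power order; as $|\mathbb{A}|=6$ is not a prime power, $\mathbb{A}$ is not supernilpotent.

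Part (3) is the heart of the matter, and I would prove the stronger statement that no finite subdirect power $\mathbb{D}\le_{\mathrm{sd}}\mathbb{A}^n$ admits a homomorphism into $\mathbb{B}$ at all. Any homomorphism $\mu\colon\mathbb{D}\to\mathbb{B}$ restricts to a group homomorphism into $\mathbb{B}_+\cong\mathbb{Z}_2$; since the kernel of reduction mod $2$ consists of elements of odd order, $\mu$ factors as $\mu=h\circ\pi_2$ for a $\mathbb{Z}_2$-linear functional $h$ on $\pi_2(\mathbb{D})\le\mathbb{Z}_2^n$. Preservation of the constant forces $h(\mathbf 1)=1$ (as $\pi_2(c_{\mathbb D})=\mathbf 1$), while preservation of $s$ forces $h(\chi(d))=s_{\mathbb B}(\mu(d))=0$ for all $d$, where $\chi(d)_i=[\,d_i\not\equiv 0\ (\mathrm{mod}\ 3)\,]$; note $\chi(d)=\pi_2(s(d))$ depends only on $v=\pi_3(d)$ in $V:=\pi_3(\mathbb{D})$, a subdirect subspace of $\mathbb{Z}_3^n$. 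Thus $h$ must vanish on $W:=\langle\chi(v):v\in V\rangle$ while satisfying $h(\mathbf 1)=1$, which is impossible once $\mathbf 1\in W$ is established.

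The crux is therefore the combinatorial claim that for every subdirect $V\le\mathbb{Z}_3^k$ one has $\mathbf 1\in\langle\chi(v):v\in V\rangle$; dually, that no subdirect subspace of $\mathbb{Z}_3^k$ with $k$ odd can have every vector supported on an even number of coordinates. I expect this to be the main obstacle, and I would settle it by a counting argument over the projective space of $V$. Grouping the coordinates by the direction $[\phi]\in\mathbb{P}(V^*)$ of the functional $v\mapsto v_i$ (each nonzero by subdirectness), the weight of $v$ equals $\sum_{[\phi]}m_{[\phi]}[\phi(v)\ne 0]$, where $m_{[\phi]}$ counts coordinates of direction $[\phi]$ and $\sum_{[\phi]}m_{[\phi]}=k$. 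Summing the (assumed even) weights of one representative $v_{[\psi]}$ from each projective point $[\psi]\in\mathbb{P}(V)$ yields $\sum_{[\phi]}m_{[\phi]}\cdot\#\{[\psi]:\phi(v_{[\psi]})\ne 0\}$, and for each fixed nonzero $\phi$ the number of lines of $V$ not lying in $\ker\phi$ is exactly $3^{d-1}$ (with $d=\dim V$), which is odd. Hence $k\equiv\sum_{[\phi]}m_{[\phi]}\equiv 0\pmod 2$, so $k$ is even; feeding this back gives $\mathbf 1\in W$, contradicting $h(\mathbf 1)=1$. Therefore $\mathbb{B}$ is not a homomorphic image of any finite subdirect power of $\mathbb{A}$.
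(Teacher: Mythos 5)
Your proposal is correct, and parts (1)--(2) match the paper in substance (the paper verifies $[1,\theta]=0$ by exhibiting the two-class congruence $\Delta$ on $\mathbb A(\theta)$ whose diagonal is a single class, rather than by your normal-form/term-condition computation, and it gets the exact nilpotence class from non-supernilpotence instead of your explicit failure of $C(1,1;0)$; both routes are fine, and both use the same prime-power factorization theorem to rule out supernilpotence). For part (3) the underlying parity engine is identical --- everything reduces to the oddness of $3^{d-1}$, the number of lines of a $d$-dimensional $\mathbb F_3$-space missing a fixed hyperplane --- but the packaging genuinely differs. The paper argues with ideals: if $I$ is an ideal of index $2$ in $\mathbb D\leq \mathbb A^n$, then $V_3(\mathbb D)\subseteq I$, and choosing a set $P$ containing one element from each pair $\{\wec d,-\wec d\}$ of nonzero elements of $V_3(\mathbb D)$, it proves the explicit identity $\sum_{\wec d\in P}s(\wec d)=c^{\mathbb D}$ by a coordinatewise count, forcing $c^{\mathbb D}\in I$, so every $2$-element quotient satisfies $c=0$ while $\mathbb B$ does not. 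Your version factors an arbitrary homomorphism $\mathbb D\to\mathbb B$ through mod-$2$ reduction and proves $\mathbf 1\in W=\langle\chi(v):v\in V\rangle$ nonconstructively, by annihilator duality over $\mathbb F_2$ plus a double count over projective points of $V$ and of the coordinate functionals. In hindsight the two are dual to one another: the paper's $P$ is exactly a transversal of the projective points of $V_3(\mathbb D)$, and its Subclaim, read modulo $2$, says precisely that $\sum_{[\psi]}\chi(v_{[\psi]})=\mathbf 1$, i.e., it is an explicit witness for your lemma $\mathbf 1\in W$. What each buys: the paper's element-level identity is constructive and immediately yields the clean formulation ``every $2$-element quotient of a finite subdirect power of $\mathbb A$ satisfies $c=0$''; your route isolates a reusable linear-algebra lemma about subdirect subspaces of $\mathbb Z_3^k$ and even supports, and shows the slightly stronger conclusion that no homomorphism $\mathbb D\to\mathbb B$ exists at all (which is equivalent here, since $\mathbb B$ has no proper subalgebras). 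One small superfluity: your grouping of coordinates by the projective direction $[\phi_i]$ is unnecessary, since summing weights directly over the coordinates $i$ gives $k\cdot 3^{d-1}\equiv k \pmod 2$ in one step.
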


The proof of this theorem spans the rest of this section.

Item (1) of this theorem means that the term operations
of $\mathbb A$ include those of a finite group.
Those term operations are $x+y:= x +^{\mathbb A} y$,
$-x: = x +^{\mathbb A} x +^{\mathbb A} x +^{\mathbb A} x +^{\mathbb A} x = 5x$,
and $0:= c +^{\mathbb A} c$.

The assertion in Item (2) that $\mathbb A$
is $2$-step nilpotent means that $[1,1]>[1,[1,1]]=0$
where $[x,y]$ is the binary commutator.
Since, by (1), $\mathbb A$ generates a congruence
modular variety, $[x,y]=[x,y]_2$, so it does
not matter whether we are referring to
the $1$-term or $2$-term commutator.
To establish $(\leq 2)$-step nilpotence
it will be enough to check that $[1,1]\leq \theta$ and
$[1,\theta]=0$.

The algebra $\mathbb A/\theta$ is term equivalent
to the abelian group $\mathbb Z_3$, which is an abelian
algebra, and this is enough
to prove that $[1,1]\leq\theta$. To see that
$[1,\theta]=0$ it is enough to exhibit a congruence
on $\mathbb A(\theta)=\mathbf{\theta}\leq \mathbb A^2$
which has the diagonal of $A^2$ as a single class.
That congruence is:
the equivalence relation $\Delta$
on $\mathbb A(\theta)$ with exactly
two classes, the diagonal and the off-diagonal.
The reason that this is a congruence on $\mathbb A(\theta)$
is that $\Delta$ is an equivalence relation
that is compatible with the operations of the pointed
abelian group $\langle A; +, c\rangle$,
since $\theta$ is a congruence of this reduct
and this reduct is abelian. Finally,
$\Delta$ is compatible with $s$ since
$s$ maps all of $\mathbb A(\theta)$ into the diagonal,
which is a single $\Delta$-class.

We already mentioned in Corollary~\ref{main3}
that a finite supernilpotent
algebra with a Maltsev operation must factor
into a product of prime power cardinality nilpotent algebras.
Since $\mathbb A$ does not have prime power cardinality,
and its congruence lattice indicates that it has no
nontrivial direct factorization,
we derive that $\mathbb A$ cannot be supernilpotent.
It therefore cannot be abelian, hence its
nilpotence class must be exactly $2$.
This completes the proof of Item (2).

Now we focus all of our attention on the key element of
Theorem~\ref{example}, namely that
the subalgebra $\mathbb B\leq \mathbb A$ is not
representable as a homomorphic image
of a finite subdirect power of $\mathbb A$.

The language of $\mathbb A$ has $c$ and $0:=s(c) = c+c$
as constant terms, and in a given member of the
variety generated by $\mathbb A$ these terms
may or may not interpret as the same constant.
In both $\mathbb A$ and $\mathbb B$ these constants
are interpreted differently, which means that the algebras $\mathbb A$
and $\mathbb B$ fail to satisfy the identity $c=0$.
We shall prove Item~(3)
of Theorem~\ref{example} by showing that any
$2$-element algebra that is representable as a homomorphic image
of a finite subdirect power of $\mathbb A$ must satisfy
the identity
$c=0$.

Since $\mathbb A$ is an expansion of a group, we may
use concepts from group theory/ring theory.
If $\mathbb X$ is any member of the variety generated
by $\mathbb A$, then by an \emph{ideal} of $\mathbb X$
we mean a congruence class containing $0$.
(I.e., $I=0/\alpha$ for some $\alpha\in\textrm{Con}(\mathbb X)$.)
The index of one ideal in another
is computed as one would compute the index
of the underlying additive group of one
in the other. By a Sylow
$p$-subgroup of $\mathbb X$
we mean a Sylow $p$-subgroup of the underlying
additive group. We write the Sylow $p$-subgroup
of $\mathbb X$ as $V_p(\mathbb X)$.

Suppose that $\mathbb T$ is any $2$-element algebra that
is representable as a homomorphic image of some 
finite subdirect power $\mathbb D$ of $\mathbb A$.
%Since $\mathbb A$ is finite, $\mathbb D$ must be finite.
Suppose that $I$ is an ideal
of $\mathbb D$ for which $\mathbb D/I\cong \mathbb T$.
Since $|T|=2$, it must be that $[D:I]=2$, hence
$V_3(\mathbb D)\subseteq I$. We shall argue that if
\begin{enumerate}
\item $\mathbb D\leq \mathbb A^n$ is subdirect,
\item $I$ is an ideal of $\mathbb D$, and
\item $V_3(\mathbb D)\subseteq I$, 
\end{enumerate}
then
\begin{enumerate}
\item[(4)] $c^{\mathbb D}\in I$.
\end{enumerate}
Hence $\mathbb D/I\cong \mathbb T$ satisfies
the identity $c=0$. This will show
that $\mathbb B\not\cong\mathbb T$. Since $\mathbb T$
was an arbitrary $2$-element quotient of a finite
subdirect power of $\mathbb A$, this will prove
Theorem~\ref{example}~(3).

\begin{claim}
  If $\mathbb D\leq \mathbb A^n$ is a
  representation of 
  $\mathbb D$ as a subdirect power of the
  algebra $\mathbb A$, then
  $V_3(\mathbb D)\leq V_3(\mathbb A^n)=V_3(\mathbb A)^n$ is a
  representation of the group
%\underline{group}
  $V_3(\mathbb D)$ as a subdirect power
  of the group $V_3(\mathbb A)$.
\end{claim}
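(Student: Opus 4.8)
The plan is to discard all of the algebra structure except the additive group reduct and to observe that, for the groups in play, the Sylow $3$-subgroup is the image of a single unary term operation. Since $\mathbb A$ expands the group $\mathbb Z_6$, the additive reducts of $\mathbb A$, of $\mathbb A^n$, and of $\mathbb D$ are all finite abelian groups of exponent dividing $6$ (and $\mathbb D$ is finite because $\mathbb A$ is). Write $t(x):=x+x+x+x$ (``multiplication by $4$''). Any such group $G$ splits as $G=V_2(G)\oplus V_3(G)$, and on this decomposition $t$ acts as $0$ on $V_2(G)$ (there $2x=0$, so $4x=0$) and as the identity on $V_3(G)$ (there $3x=0$, so $4x=x$). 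Hence the first step is to record the identity $V_3(G)=t(G)=\{t(g)\mid g\in G\}$ for every group $G$ of this kind.

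With $V_3$ realized as the term-image $t(G)$, the first two assertions of the claim become routine. Because term operations are computed coordinatewise in a power, $V_3(\mathbb A^n)=t(\mathbb A^n)=\bigl(t(\mathbb A)\bigr)^n=V_3(\mathbb A)^n$. Next, since $\mathbb D$ is a subalgebra of $\mathbb A^n$ its additive reduct is a subgroup closed under $t$, so $V_3(\mathbb D)=t(\mathbb D)\subseteq t(\mathbb A^n)=V_3(\mathbb A^n)$, which establishes the containment $V_3(\mathbb D)\leq V_3(\mathbb A)^n$.

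The one assertion that carries real content is subdirectness, and this is exactly where expressing $V_3$ as a term-image pays off. Let $\pi_i\colon\mathbb D\to\mathbb A$ be the $i$-th coordinate projection; by hypothesis $\mathbb D\leq\mathbb A^n$ is subdirect, so each $\pi_i$ is surjective. Because $\pi_i$ is a homomorphism it commutes with the term $t$, so
\[
\pi_i\bigl(V_3(\mathbb D)\bigr)=\pi_i\bigl(t(\mathbb D)\bigr)=t\bigl(\pi_i(\mathbb D)\bigr)=t(\mathbb A)=V_3(\mathbb A).
\]
Thus every coordinate projection of $V_3(\mathbb D)$ is onto $V_3(\mathbb A)$, which is precisely the statement that $V_3(\mathbb D)\leq V_3(\mathbb A)^n$ is a subdirect representation.

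The main obstacle is conceptual rather than computational: one must recognize that the Sylow $3$-subgroup is not merely a characteristic subgroup but the image of a term operation, so that it is automatically carried \emph{onto} the target Sylow subgroup by any surjective homomorphism. If one declines to use the term $t$, the subdirectness step instead requires the general group-theoretic fact that a surjective homomorphism sends a Sylow $p$-subgroup onto a Sylow $p$-subgroup; in the abelian setting this follows from uniqueness of the primary decomposition, but the term-operation argument makes the point transparent and is the version I would present.
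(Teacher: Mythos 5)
Your proof is correct and takes essentially the same approach as the paper: both realize the Sylow $3$-subgroup as the image of the idempotent unary term operation $x\mapsto 4x$ (your $t$, the paper's $e$) and use the fact that term operations commute with coordinate projections to transfer subdirectness from $\mathbb D$ to $V_3(\mathbb D)$. The only cosmetic difference is that the paper fixes $a\in V_3(\mathbb A)$, lifts it to a tuple $\wec{d}\in\mathbb D$, and applies $e$ to land in $V_3(\mathbb D)$ with $i$-th coordinate $a$, whereas you express the identical computation as the surjectivity of $\pi_i\circ t = t\circ \pi_i$ onto $t(\mathbb A)=V_3(\mathbb A)$.
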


Choose any $i\in\{1,\ldots,n\}$ and
any $a\in V_3(\mathbb A)$.
Use the fact that
$\mathbb D\leq \mathbb A^n$ is subdirect
to find a tuple
$\wec{d}\in\mathbb D$ such that $d_i=a$.
The underlying group of
$\mathbb A$ has an idempotent unary term operation $e(x)=4x$
for which $e(\mathbb A)=V_3(\mathbb A)$.
Now $e(\wec{d}) = (e(d_1),\ldots,e(d_n))$
has the properties that
\begin{enumerate}
\item[(i)] $e(\wec{d})_i=e(d_i)=e(a)=a$, and
\item[(ii)]
  $e(\wec{d})\in e(\mathbb D)\subseteq \mathbb D\cap e(\mathbb A^n)
= \mathbb D\cap V_3(\mathbb A^n) = V_3(\mathbb D).$
\end{enumerate}
This shows that the group $V_3(\mathbb D)$
contains a tuple $e(\wec{d})$ whose $i$-th coordinate
is $a$. Since $i\in\{1,\ldots,n\}$ and $a\in V_3(\mathbb A)$
were arbitrary, this is enough to show
that $V_3(\mathbb D)\leq V_3(\mathbb A)^n$ is subdirect.

The proof of the following claim completes the proof
of Theorem~\ref{example}~(3).

\begin{claim} \label{c}
  If $\mathbb D\leq \mathbb A^n$ is a subdirect product
  representation of $\mathbb D$, and $I$ is an ideal of $\mathbb D$
  containing $V_3(\mathbb D)$, then $c^{\mathbb D}\in I$.
\end{claim}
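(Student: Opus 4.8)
The plan is to exploit the shape of $s$: on $\mathbb A$ we have $s(x)=0$ for $x\in\{0,3\}=V_2(\mathbb A)$ and $s(x)=3=c^{\mathbb A}$ otherwise, so on elements of $V_3(\mathbb A)=\{0,2,4\}$ the value $s(x)$ records exactly whether $x\ne 0$. First I would note that $I=0/\alpha$ is an additive subgroup of $\mathbb D$ closed under $s$ (since $+$ and $s$ fix $0$), although it need not contain $c^{\mathbb D}$. Since $V_3(\mathbb D)\subseteq I$, for every $\wec{v}\in V_3(\mathbb D)$ the tuple $s(\wec{v})$ lies in $I$; and because the coordinates of $\wec{v}$ lie in $\{0,2,4\}$ with $s(0)=0$, $s(2)=s(4)=3$, the tuple $s(\wec{v})$ is equal to $3$ exactly on the support of $\wec{v}$ and to $0$ elsewhere. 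As $c^{\mathbb D}=(3,\dots,3)$, the group of $\{0,3\}$-tuples in $\mathbb D$ maps by $3\mapsto 1$ onto a subspace of $\mathbb F_2^{\,n}$ (here $\mathbb F_3=\mathbb Z_3$), carrying each $s(\wec{v})$ to $\mathrm{supp}(\wec{v})$ and $c^{\mathbb D}$ to the all-ones vector $\wec{1}$. Thus the whole claim reduces to showing that $\wec{1}$ lies in the $\mathbb F_2$-span $U$ of $\{\,\mathrm{supp}(\wec{v}):\wec{v}\in V_3(\mathbb D)\,\}$, since then $c^{\mathbb D}$ is a sum of tuples $s(\wec{v})\in I$.

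By the previous claim $V_3(\mathbb D)\le V_3(\mathbb A)^n=\mathbb F_3^{\,n}$ is subdirect. By $\mathbb F_2$-duality, $\wec{1}\in U$ if and only if every $S\subseteq\{1,\dots,n\}$ orthogonal to $U$ has even size; and $S$ is orthogonal to $U$ exactly when every $\wec{v}\in V_3(\mathbb D)$ meets $S$ in an even number of nonzero coordinates, i.e.\ when the projection $W_S:=\pi_S\bigl(V_3(\mathbb D)\bigr)\le\mathbb F_3^{\,S}$ (still subdirect, as each coordinate projection of $V_3(\mathbb D)$ is onto) has all Hamming weights even. Hence it suffices to prove the following lemma: a subdirect subspace $W\le\mathbb F_3^{\,m}$ all of whose vectors have even weight must have $m$ even.

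This lemma is the main obstacle. The tempting shortcut---produce a single $\wec{v}\in V_3(\mathbb D)$ of full support and hit it with $s$---is genuinely unavailable: for instance $\{(u_1,u_2,u_1+u_2,u_1+2u_2):u\in\mathbb F_3^2\}$ is subdirect in $\mathbb F_3^{\,4}$, yet every one of its vectors has a zero coordinate, so one must combine several supports over $\mathbb F_2$. To prove the lemma I would fix a generator matrix of $W$ with columns $g_1,\dots,g_m\in\mathbb F_3^{\,d}\setminus\{0\}$ (where $d=\dim W$, and subdirectness is exactly the condition that all $g_i\ne 0$), so that the codeword indexed by $u\in\mathbb F_3^{\,d}$ has weight $\#\{i:\langle u,g_i\rangle\ne 0\}$. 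The even-weight hypothesis then says: for every hyperplane $H=u^{\perp}$ (equivalently every nonzero $u$), the integer $\#\{i:g_i\in H\}$ is congruent to $m\pmod 2$. Now double count incidences between the $g_i$ and the hyperplanes of $\mathbb F_3^{\,d}$: each nonzero vector lies on exactly $t=(3^{d-1}-1)/2$ of the $N=(3^{d}-1)/2$ hyperplanes, so $\sum_H\#\{i:g_i\in H\}=mt$ as an exact integer, while summing the hypothesis over all $N$ hyperplanes gives $\sum_H\#\{i:g_i\in H\}\equiv mN\pmod 2$. Therefore $m(N-t)\equiv 0\pmod 2$, and since $N-t=3^{d-1}$ is odd this forces $m$ even. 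That establishes the lemma, hence $\wec{1}\in U$, and finally $c^{\mathbb D}\in I$.
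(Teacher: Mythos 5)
Your proposal is correct, but it takes a genuinely different route from the paper's. You translate the problem into $\mathbb F_2$-linear algebra: via the isomorphism $3\mapsto 1$ on $\{0,3\}$-valued tuples, the claim reduces to showing that the all-ones vector lies in the $\mathbb F_2$-span $U$ of the supports of elements of $V_3(\mathbb D)$, which you prove nonconstructively by $\mathbb F_2$-duality ($\wec{1}\in U=U^{\perp\perp}$ iff every $S\in U^{\perp}$ has even size) together with a standalone coding-theoretic lemma --- a subdirect even-weight subspace of $\mathbb F_3^{\,m}$ forces $m$ even --- established by double counting point--hyperplane incidences in $\mathbb F_3^{\,d}$, where the decisive parity is that $N-t=3^{d-1}$ is odd. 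The paper instead produces an explicit witness: it partitions $V_3(\mathbb D)\setminus\{0\}$ into negation orbits $\{\wec{d},-\wec{d}\}$, chooses a transversal $P$, and verifies the single identity $\sum_{\wec{d}\in P}s(\wec{d})=c^{\mathbb D}$ coordinatewise, using that the complement of $\ker(\pi_i)$ has $2\cdot 3^{k-1}$ elements, exactly half of which lie in $P$, so each coordinate of the sum is $3^{k-1}\cdot c^{\mathbb A}=c^{\mathbb A}$ because $3^{k-1}$ is odd and $c^{\mathbb A}$ has additive order $2$. In your language, the paper's $P$ satisfies $\sum_{\wec{d}\in P}\mathrm{supp}(\wec{d})=\wec{1}$ outright, so duality and the lemma are bypassed entirely. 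Both arguments bottom out at the same parity fact (an odd count $3^{k-1}$ per hyperplane complement), but the paper's version is shorter and constructive, whereas yours buys a reusable lemma about ternary codes and contains the genuinely clarifying observation, backed by your $\mathbb F_3^{\,4}$ counterexample, that no single full-support element of $V_3(\mathbb D)$ need exist --- which explains why some aggregation over many supports is unavoidable in either approach.
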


Partition $V_3(\mathbb D)$ into three sets,
$\{0\}, P, -P$ in any way desired, subject to
the condition $(\wec{d}\in P)\Leftrightarrow (-\wec{d}\in -P)$.
This is possible, since the group of permutations of $V_3(\mathbb D)$
consisting of the identity function $x\mapsto x$
and the negation function $x\mapsto -x$ acts on
$V_3(\mathbb D)$ in a way that partitions this set into
a single $1$-element orbit $\{0\}$ and a family
of $2$-element orbits $\{\wec{d}, -\wec{d}\}$, $\wec{d}\neq 0$,
and we may create $P$ by choosing one element from
each $2$-element orbit. We define $-P$ so that it consists
of the remaining elements.

\begin{subclaim} \label{subclaim}
  $\sum_{\wec{d}\in P} s(\wec{d}) = c^{\mathbb D}
  = (c^{\mathbb A},\ldots,c^{\mathbb A})$.
\end{subclaim}

We will show this by examining each coordinate of 
$\sum_{\wec{d}\in P} s(\wec{d})$ separately,
and showing that the result is always $c^{\mathbb A}$.
Let $i$ be an arbitrary coordinate.
Since $V_3(\mathbb D)$ is subdirect in
$V_3(\mathbb A)^n$, the projection $\pi_i$ onto the
$i$-th coordinate is a nonconstant linear functional
which maps the $\mathbb F_3$-vector space $V_3(\mathbb D)$
onto $V_3(\mathbb A)\cong \mathbb F_3$.
The size of the domain of $\pi_i$
is $|V_3(\mathbb D)|=3^k$, where $k=\dim_{\mathbb F_3}(V_3(\mathbb D))$.
The size of the (codimension-$1$) kernel of $\pi_i$
is therefore $|V_3(\mathbb D)|/3 = 3^{k-1}$,
and so
\begin{equation}\label{eqn}
|V_3(\mathbb D) \setminus \ker(\pi_i)|=3^k-3^{k-1}=2\cdot 3^{k-1}.
\end{equation}
If $\wec{d}\in \ker(\pi_i)$, then $d_i=0$, 
so $s(\wec{d})_i = s(0) = 0$.
If $\wec{d}\in V_3(\mathbb D) \setminus \ker(\pi_i)$,
then $s(\wec{d})_i = s(d_i)=c^{\mathbb A}$.
If $\wec{d}\in V_3(\mathbb D) \setminus \ker(\pi_i)$,
then $-\wec{d}\in V_3(\mathbb D) \setminus \ker(\pi_i)$,
and exactly one of $\{\wec{d},-\wec{d}\}$
belongs to $P$, so exactly half of the elements
in $V_3(\mathbb D) \setminus \ker(\pi_i)$ belong to $P$.
Hence, 
\[
\pi_i\left(\sum_{\wec{d}\in P} s(\wec{d})\right) \wrel{=}
\sum_{\wec{d}\in P} \pi_i(s(\wec{d})) \wrel{=}
\frac{1}{2}|V_3(\mathbb D) \setminus \ker(\pi_i)|\cdot c^{\mathbb A}
\;\stackrel{(\ref{eqn})}{=}\;
3^{k-1}\cdot c^{\mathbb A} \wrel{=}
c^{\mathbb A}.
\]
(The last equality uses the facts that
the additive order of $c^{\mathbb A}$ is $2$ and
$3^{k-1}$ is odd.)
%, and therefore $3^{k-1}\cdot c^{\mathbb A}=c^{\mathbb A}$.)
Since $\pi_i(\sum_{\wec{d}\in P} s(\wec{d})) = c^{\mathbb A}$
for every $i$, we have
$\sum_{\wec{d}\in P} s(\wec{d}) = c^{\mathbb D}$. \;\; $\Box$

\bigskip

Now we complete the proof of Claim~\ref{c}.

Given that $P\subseteq V_3(\mathbb D)\subseteq I$,
we have that $\wec{d}\in P$
implies $\wec{d}\in I$, and the latter may be written as
$\wec{d}\equiv 0\pmod{I}$. Hence $s(\wec{d})\equiv s(0)=0~\pmod{I}$,
since $I$ is a congruence class.
By Subclaim~\ref{subclaim} we have
$c^{\mathbb D}=\sum_{\wec{d}\in P} s(\wec{d}) \equiv 0\pmod{I}$.
This yields $c^{\mathbb D}\in I$, which is the conclusion
to be drawn in Claim~\ref{c}.
$\Box$

\bibliographystyle{plain}

\end{document}